\theoremstyle{definition}
\newtheorem{theorem}{Theorem}
\newtheorem{corollary}{Corollary}
\newtheorem{proposition}{Proposition}
\theoremstyle{definition}
\newtheorem{definition}[equation]{Definition}
\newtheorem{example}[equation]{Example}
\theoremstyle{remark}
\newtheorem{remark}[equation]{Remark}
\numberwithin{equation}{section}
\numberwithin{figure}{section}
\newcommand{\A}{{\mathbb A}}
\newcommand{\C}{{\mathbb C}}
\newcommand{\Z}{{\mathbb Z}}
\newcommand{\Q}{{\mathbb Q}}
\newcommand{\R}{{\mathbb R}}
\newcommand{\N}{{\mathbb N}}
\newcommand{\mc}[1]{\mathcal{#1}} 
\newcommand{\ms}[1]{\mathscr{#1}} 
\newcommand{\mbf}[1]{\mathbf{#1}} 
\newcommand{\mt}[1]{\text{#1}}
\DeclareMathOperator{\Spec}{Spec}
\begin{document}

\title{Classification of 
Reductive Monoid Spaces
Over an Arbitrary Field}
\author[1]{Mahir Bilen Can}
	\date{August 16, 2018}
	\maketitle

\abstract{In this semi-expository paper,
we review the notion of a 
spherical space. 
In particular, we present
some recent results of Wedhorn 
on the classification of 
spherical spaces over arbitrary
fields. As an application,
we introduce and classify 
reductive monoid spaces over an
arbitrary field.}

\section{Introduction}
\label{sec:1}

The classification of 
spherical homogenous 
varieties, more generally
of spherical actions, is
an important, lively, and very 
interesting chapter 
in modern algebraic geometry.
It naturally encompasses 
the classification theories of 
toric varieties, horospherical
varieties, symmetric varieties, 
wonderful compactifications, 
as well as that 
of reductive monoids. 
Our goal in this paper
is to give an expository 
account of some recent 
work in this field. 
As far as we are aware, 
the broadest context in which 
a classification of such objects is achieved,
is in the theory of algebraic
spaces. This accomplishment 
is due to Wedhorn~\cite{Wedhorn}.
Here, we will follow 
Wedhorn's footsteps closely 
to derive some conclusions. 
At the same time, our intention is to provide
enough detail to make 
basic definitions accessible 
to beginners. We will explain a 
straightforward application 
of Wedhorn's progress to 
monoid schemes.

It is not completely 
wrong to claim that 
the origins of our story
go back to Legendre's
work~\cite{Legendre}, 
where he analyzed the 
gravitational potential 
of a point surrounded 
by a spherical surface 
in 3-space. To describe
the representative 
functions of his 
enterprise, he found 
a clever change 
of coordinates 
argument and 
introduced the set of 
orthogonal polynomials 
$\{P_n(x)\}_{n\geq0}$ via  
$
\frac{ 1} {\sqrt{1-2hx +h^2}} 
= \sum_{n\geq 0} P_n(x) h^n,
$
which are now known as
Lagrange polynomials. 
It has eventually been understood 
that the $P_n(x)$'s are the 
eigenfunctions of the operator 
$\varDelta=\frac{\partial^2}{\partial x^2} 
+\frac{\partial^2}{\partial y^2} 
+\frac{\partial^2}{\partial z^2}$ 
restricted to the space 
of $\ms{C}^\infty$ functions 
on the unit 2-sphere.
Nowadays, bits and pieces 
of these elementary facts 
can be found in every 
standard calculus textbook
but their generalization
to higher dimensions 
can be explained in a conceptual 
way by using 
transformation groups.

Let $n$ be a positive
integer, and let $Q_n$ denote the 
standard quadratic form on $\R^n$,
$$
Q_n(x) :=x_1^2+\cdots + x_n^2, 
\ \ x=(x_1,\dots, x_n)\in \R^n.
$$ 
The orthogonal group,
denoted by $\textbf{O}(Q_n)$, 
consists of linear transformations 
$L:\R^n\rightarrow \R^n$ 
such that $Q_n(L(x))=Q_n(x)$ 
for all $x\in \R^n$.
It acts transitively on the $n-1$
sphere 
$S^{n-1}=\{ x\in \R^n :\ Q_n(x)=1\}$,
and the isotropy subgroup in
$\textbf{O}(Q_n)$
of a point $x\in S^{n-1}$ 
is isomorphic to 
$\textbf{O}(Q_{n-1})$.
It is not difficult to write 
down a Lie group automorphism 
$\sigma : \textbf{O}(Q_n)
\rightarrow \textbf{O}(Q_n)$ 
of order two such that 
the fixed point subgroup  
$\textbf{O}(Q_n)^\sigma$
is isomorphic to $\textbf{O}(Q_{n-1})$.
In other words, $S^{n-1}$ has the 
structure of a {\em symmetric space}, 
that is, a quotient manifold of the 
form $G/K$, where $G$ is a 
Lie group and $K=\{g\in G:\ \sigma(g)=g\}$ 
is the fixed subgroup of an
automorphism $\sigma : G\rightarrow G$
with $\sigma^2=id$. 
It is known that the Laplace-Beltrami 
operator $\varDelta_n :=
\sum^n_{i=1} \frac{\partial^2}{\partial x_i^2}$ 
generates the algebra of 
$\textbf{O}(Q_n)$-invariant 
differential operators on $S^{n-1}$.
Moreover, for each $k\in \N$, there is one 
eigenspace $E_k$ of 
$\varDelta_n$ with eigenvalue $-k(k+n-2)$; 
each $E_k$ defines a finite dimensional
and irreducible representation of 
$\textbf{O}(Q_n)$.
In addition, 
the Hilbert space of square integrable 
functions on $S^{n-1}$ has 
an orthogonal space 
decomposition, 
$L^2(S^{n-1}) = \sum_{k=0}^\infty E_k$.
The last point of this
example is the most important 
for our purposes; the representation 
of $\textbf{O}(Q_n)$ on 
the polynomial functions on $S^{n-1}$ is 
multiplicity-free! 
All these facts are well known and 
can be found in classical textbooks 
such as~\cite{Helgason:GGA} or
\cite{BrockertomDieck}.

We will give another example 
to indicate how often we 
run into such multiplicity-free
phenomena in the theory of Lie groups. 
This time we start with an
arbitrary compact Lie group, 
denoted by $K$, and consider 
$C^0(K,\C)$, the algebra of 
continuous functions
on $K$ with complex values. 
The doubled group $K\times K$
acts on $K$ by translations:
$$
(g,h) \cdot x = gxh^{-1} \text{ for all } g,h,x\in K.
$$
In particular, we 
have a representation 
of $K\times K$ 
on $C^0(K,\C)$,
which is infinite dimensional 
unless $K$ is a finite group. 
Let $\ms{F}(K,\C)$ denote
the subalgebra of {\em 
representative functions},
which, by definition, are 
the functions  
$f\in C^0(K,\C)$ 
such that $K\times K\cdot f$
is contained in a finite dimensional
submodule of $C^0(K,\C)$. 
The theorem of 
Peter and Weyl 
states that 
$\ms{F}(K,\C)$
is dense in $C^0(K,\C)$.
Moreover as a
representation of $K\times K$, 
$\ms{F}(K,\C)$ has
an orthogonal space decomposition
into finite dimensional 
irreducible 
$K\times K$-representations,
each irreducible occurring with 
multiplicity at most one.

We have a quite related, analogous 
statement on the multiplicity-freeness 
of the $G\times G$-module 
structure of the 
coordinate ring $\C[G]$
of a reductive complex 
algebraic group $G$. 
Indeed, it is a well known 
fact that on every compact 
Lie group $K$ there exists 
a unique real algebraic 
group structure, and furthermore,
its complexification $K(\C)$
is a complex algebraic group
which is reductive. 
Conversely, any reductive 
complex algebraic group
has an algebraic
compact real form
and this real form 
has the structure of a
compact Lie group. 
Two compact Lie groups
are isomorphic as 
Lie groups 
if and only if 
the corresponding reductive
complex algebraic groups
are isomorphic (see~\cite{Chevalley, OnishchikVinberg}).

The unifying theme of these  
examples, as we alluded to before, 
is the multiplicity-freeness 
of the action on the function space
of the underlying variety or manifold. 
It turns out that the
multiplicity-freeness 
is closely related to the size 
of orbits of certain subgroups.
To explain this more clearly, 
for the time being, 
we confine ourselves to the 
setting of affine algebraic varieties
that are defined over $\C$. 
But we have a disclaimer:
irrespective of the underlying 
field of definitions, our tacit assumption 
throughout this paper is that
all reductive groups are connected
unless otherwise noted.

Now, let $G$ be a reductive complex
algebraic group, and let $X$ be
an affine variety on which $G$
acts algebraically. 
We fix a Borel subgroup $B$, that is, 
a maximal connected solvable 
subgroup of $G$. It is well known that 
every other Borel subgroup of 
$G$ is conjugate to $B$. 
The action of $G$ on $X$ gives
rise to an action of $G$, 
hence of $B$, on the coordinate 
ring $\C[X]$. 
Let us assume that $B$ has
finitely many orbits in $X$, so,
in the Zariski topology,  
one of the $B$-orbits is open.
Let $\chi$ be a character
of $B$ and $x_0$ be a 
general point from the dense $B$-orbit. 
Let $f$ be a regular 
function that is only defined
on the open orbit. Hence, 
we view $f$ as an element of 
$\C(X)$, the field of rational 
functions on $X$. If $f$ 
is an $\chi$-eigenfunction for the 
$B$-action, that is, 
$$
b\cdot f = \chi(b) f \ \text{ for all } b\in B,
$$
then the value of $f$ on the
whole orbit is uniquely determined
by $\chi$ and the base point
$x_0$. Indeed, any point
$x$ from the open orbit 
has the form $x=b\cdot x_0$ 
for some $b\in B$, 
therefore, 
$$
f(x) = b^{-1}\cdot f(x_0) = \chi(b^{-1}) f(x_0).
$$
This simple argument shows 
that there exists at most one 
$\chi$-eigenvector in $\C[X]$
whose restriction to the open 
$B$-orbit equals $f$. 
As irreducible representations of 
reductive groups are parametrized
by the `highest' $B$-eigenvectors, 
now we understand that 
the number of occurrence 
of the irreducible
representation corresponding 
to the character $\chi$ in $\C[X]$ 
cannot exceed 1. In other words, 
$\C[X]$ is a multiplicity-free $G$-module. 
The converse of this statement is 
true as well; if a linear and 
algebraic action
$G\times \C[X]\rightarrow \C[X]$
is multiplicity-free, then 
a Borel subgroup of $G$ 
has finitely many orbits in $X$,
and one of these orbits is open 
and dense in $X$ 
(see~\cite{Brion:Quelques}, as well 
as \cite{Vinberg:Complexity, Popov:Contractions}).
This brings us to the 
(special case of a) fundamental 
definition that will occupy us 
in the rest of our paper.

\begin{definition}
Let $k$ be an algebraically 
closed field, and let $G$ be a 
reductive algebraic group
defined over $k$. 
Let $X$ be a normal variety
that is defined over $k$, and 
finally, let $\psi:G\times X\rightarrow X$
be an algebraic action of $G$ on $X$. 
If the restriction of the 
action to a Borel subgroup 
has finitely many orbits, then
the action is called {\em spherical}, 
and $X$ is called 
a {\em spherical $G$-variety}.
\end{definition}

Let $H\subset G$
be a closed subgroup 
in a reductive group $G$.
The homogenous space
$G/H$ is called 
spherical if $BH$ 
is an open dense subvariety
in $G$ for some Borel subgroup
$B\subset G$. 
A $G$-variety $X$ is 
called an equivariant embedding
of $G/H$ if $X$ has an open
orbit that is isomorphic to $G/H$.
In particular, spherical varieties 
are normal $G$-equivariant
embeddings of spherical 
homogenous $G$-varieties.
To see this, let $x_0$ be
a general point from the 
open $B$-orbit in a spherical
variety $X$, and let $H$
denote the isotropy subgroup
$H=\{g\in G:\ g\cdot x_0 =  x_0 \}$ in $G$.  
Clearly, $G/H$ is isomorphic
to the open $G$-orbit, and 
it is a spherical subvariety of $X$. 
It follows that $X$ is a $G$-equivariant 
embedding of $G/H$.

It goes without saying that 
this theory, as we know it,
owes its existence
to the work of Luna and 
Vust~\cite{LunaVust}
who classified the equivariant 
embeddings of spherical 
homogenous varieties over 
algebraically closed fields of 
characteristic 0.
Their results are extended to
all characteristics 
by Knop in~\cite{Knop89}.
What is left is the classification 
of spherical subgroups (over 
arbitrary fields) 
and this program is well on its way;
see the recent paper~\cite{BraviPezzini16}
and the references therein. 
For a good and broad introduction
of the field of equivariant embeddings, 
up to 2011, 
we recommend the encyclopedic
treatment~\cite{Timashev}
of Timashev.

We mentioned in the first 
paragraph of this introduction that the examples
of spherical embeddings 
include algebraic monoids. 
By definition, an algebraic 
monoid over an algebraically
closed field is an algebraic 
variety $M$
endowed with an associative 
multiplication morphism 
$m: M\times M\rightarrow M$ 
and there is a neutral element
for the multiplication. 
The foundations of these monoid 
varieties are secured mainly by the 
efforts of Renner and Putcha,
who chiefly developed the 
theory for linear (affine) algebraic monoids
(see~\cite{Renner,Putcha}).
From another angle, 
Brion and Rittatore 
looked at the general structure
of an algebraic monoid. 
Amplifying the importance 
of linear algebraic monoids,  
Brion and Rittatore showed that 
any irreducible normal algebraic 
monoid is a homogenous 
fiber bundle over an abelian 
variety, where the fiber over 
the identity element is 
a normal irreducible linear algebraic monoid 
(see Brion's lecture notes~\cite{Brion:FieldsLectures}).
In this regard, let us recite a
result of Mumford about the 
possible monoid structure 
on a complete irreducible variety
(\cite[Chapter~II]{Mumford}): 
if a complete irreducible 
variety $X$ has a (possibly
nonassociative) composition
law $m:X\times X\rightarrow X$ 
with a neutral element,
then $X$ is an abelian variety
with group law $m$.
In other words, an irreducible 
and complete algebraic monoid 
is an abelian variety. 
This interesting result of 
Mumford is extended to families 
by Brion in~\cite{Brion:FieldsLectures}.

A $G$-equivariant embedding
is said to be simple if it has a 
unique closed $G$-orbit. 
A {\em reductive monoid} is 
an irreducible algebraic monoid
whose unit-group is a reductive group. 
The role of such monoids
for the theory of 
equivariant embeddings 
was understood very early;
Renner recognized in
~\cite{Renner:ClassificationVarieties} and
\cite{Renner:vonNeumann}
that the normal reductive monoids 
are simple $G\times G$-equivariant
embeddings of reductive groups.
Explicating this observation, 
Rittatore showed in~\cite[Theorem 1]{Rittatore98}
that every irreducible 
algebraic monoid $M$ is 
a simple $G(M)\times G(M)$-equivariant 
embedding of its unit-group $G(M)$.
In the same paper, by using Knop's 
work on colored fans, which we will
describe in the sequel, 
Rittatore described
a classification of reductive monoids
in terms of colored cones. 
This classification is a 
generalization of the earlier classification of 
the reductive normal monoids by
Renner~\cite{Renner:ClassificationVarieties} 
and Vinberg~\cite{Vinberg:ReductiveSemigroups}.

Let $k$ be an algebraically
closed field. We  
define a {\em toric variety over $k$} 
as a normal algebraic variety
on which the torus $(k^*)^n$ acts 
(faithfully) with an open orbit. 
Toric varieties are prevalent
in the category of spherical varieties
in the following sense: if $X$
is a spherical $G$-variety over $k$, 
then the closure in $X$ of the
$T$-orbit of a general point
from the open $B$-orbit will 
be a toric variety. Here, $T$ stands 
for a maximal torus contained in $B$. 
Also, let us not forget the fact that 
the affine toric varieties are precisely
the commutative reductive monoids, 
\cite[Theorem 3]{Rittatore98}.

Now let $k$ be an arbitrary field, 
and let $T$ be a torus that is isomorphic 
to $(k^*)^n$, where both the 
isomorphism and the torus $T$ 
are defined over $k$. The technical term
for such a torus is {\em $k$-split}
or {\em split torus over $k$}.
Toric varieties defined by 
split tori are parametrized by 
combinatorial objects, 
the so-called fans. This fact
is due to Demazure in the 
smooth case~\cite{Demazure} 
and Danilov
for all toric varieties
\cite{Danilov}.
A {\em fan} $\mc{F}$ in $\Q^n$ is 
a finite collection of strictly convex
cones such that 
1) every face of an element 
$\mc{C}$ from $\mc{F}$ lies
in $\mc{F}$;
2) the intersection
of two elements of $\mc{F}$ 
is a face of both of the cones. 
Here, 
by a {\em cone} we mean 
a subset of $\Q^n$ that 
is closed under
addition and 
the scaling action
of $\Q_{\geq 0}$.
A {\em face} of a 
cone $\mc{C}$ 
is a subset 
of the form 
$\{v\in \mc{C} :\ 
\alpha (v)=0\}$, 
where $\alpha$ is 
a linear functional
on $\Q^n$ 
that takes nonnegative
values on $\mc{C}$. 
Let us define two more 
notions that will be used 
in the sequel. 
A cone is called {\em strictly 
convex}, if it does not
contain a line.
The {\em relative interior} 
of $\mc{C}$, denoted
by $\mc{C}^0$, is what is left
after removing all of 
its proper faces.

The toric varieties defined by 
nonsplit tori are quite  
interesting and their classification
is significantly more intricate.
Parametrizing 
combinatorial objects in this case,
as shown by Huruguen in 
~\cite{Huruguen} are 
fans that are stable 
under the Galois group of 
a splitting field. To explain this,
we extend our earlier
definition of toric varieties
as follows.
Let $k$ be a field, and let $\overline{k}$
denote an algebraic closure of $k$. 
Let $T$ be a torus defined
over $k$. A normal $T$-variety 
$Y$ is called a {\em toric variety over $k$} if 
$Y(\overline{k})$
is a toric variety with respect to 
$T(\overline{k})$.
(This notation will be made precise 
in Section~\ref{S:Notation}.)
Let us continue with the assumption 
that $Y$ is a toric variety with respect 
to a $k$-split torus $T$, and 
let $k'\subseteq k$ be a field 
extension with $T$ not 
necessarily split over $k'$.   
Let $\varGamma$
denote the Galois group
of the extension. 
Since all tori become split
over a finite separable field 
extension, we will assume 
also that 
$k'\subset k$ is a finite extension.
Of course, it may happen
that $Y$ is not defined over $k'$. 
If it is defined, then $Y(k')$
is called the(!) $k'$-form
of $Y$.
In~\cite[Theorem 1.22]{Huruguen}, 
Huruguen gives two necessary and 
sufficient conditions for
the existence of a $k'$-form.
The first of these 
two conditions is rather 
natural in the sense 
that the fan of $Y(k)$ is stable under 
the action of $\varGamma$. 
The second condition
is also concrete, 
however, it is more difficult to 
check. 
Actually, it is a criterion about 
the quasiprojectiveness
of an equivariant embedding
in terms of the fan of the embedding.
Its colored version, namely 
a quasiprojective colored fan,
which is also used by Huruguen,
is introduced by Brion in~\cite{Brion:Duke}.
We postpone the precise 
definition of a quasiprojective colored
fan to Section~\ref{SS:Quasiprojective},
but let us mention that, 
in~\cite{Huruguen}, 
Huruguen shows by an example
that quasiprojectiveness is an essential
requirement for $Y$ to have 
a $k'$-form.

Using the underlying idea that 
worked for toric varieties, in the 
same paper, Huruguen proves more.
Let $Y$ be a spherical homogenous
variety for a reductive group $G$ 
defined over a perfect field $k$. 
(The perfectness assumption is 
a minor glitch; it will not be needed  
once we start to work with the 
algebraic spaces as Wedhorn does.
Indeed, it is required here so that
the homogenous varieties have rational points.)
Let $\overline{k}$ be an 
algebraic closure of $k$, and 
let $\varGamma$ denote
the Galois group of $k\subset \overline{k}$. 
The introduction of 
the absolute Galois group,
which is often too big, 
is not a serious problem since
in the situations that we are interested in, 
the absolute Galois group factors 
through a finite quotient.
Let $\overline{Y}$ be a $G$-equivariant
embedding of a spherical homogenous
$G$-variety $Y$ that is defined over $\overline{k}$.
In \cite[Theorem 2.26]{Huruguen}, Huruguen shows
that a $G$-equivariant embedding
$\overline{Y}$ of $Y$
has a $k$-form if and only if 
the colored fan of $\overline{Y}(\overline{k})$
is $\varGamma$-stable and 
it is ``quasiprojective with respect to 
$\varGamma$.'' 
Once again, Huruguen shows by 
examples that the failure of 
the second condition implies
the nonexistence of $k$-forms.

As Wedhorn shows in~\cite{Wedhorn} 
via algebraic spaces, 
as soon as we get over 
the contrived emotional barrier set 
in front of us by algebraic varieties, 
we happily see that 
the existential questions (about $k$-forms) 
disappear. 
Roughly speaking, in some sense, algebraic spaces are 
to schemes, 
what schemes are to 
algebraic varieties. 
Such is the transition
from spherical
varieties to spherical 
spaces.

\begin{definition}\label{D:Wedhorns}
For an arbitrary base 
scheme $S$ and 
a reductive group scheme $G$ 
over $S$, 
a spherical $G$-space
over $S$ is a flat
separated algebraic space 
of finite presentation over $S$
with a $G$-action such that 
the geometric fibers 
are spherical varieties. 
\end{definition}

Notice that we have not 
defined reductive $S$-groups yet.
But timeliness may not be the only problematic
aspect of Definition~\ref{D:Wedhorns}. Understandably, 
it may look overly general
at first sight. Nevertheless, 
the definition has many remarkable 
consequences.
For example, according to this 
definition, the property for a flat finitely presented 
$G$-space with normal geometric fibers
to be spherical is open and 
constructible on the base scheme. 
Moreover, for a flat finitely
presented subgroup scheme $H$ 
of $G$ the property to be spherical 
is open and closed on the base scheme. 
Most importantly for our purposes, 
the specialization of the base scheme
to the spectrum of a field in 
Definition~\ref{D:Wedhorns} 
yields a general classification of 
spherical $G$-spaces over arbitrary fields
in terms of colored fans that 
are stable under the Galois group.
In fact, for spaces over fields,
Wedhorn's definition is much easier 
to use.

Since explaining the concepts associated 
with spherical algebraic spaces and colored
fans will take a bulky portion of our paper, 
we postpone giving the ballistics of 
Wedhorn's theory to 
Section~\ref{SS:WedhornsClassification}.
Nearing the end of our lengthy introduction,
let us mention that as a rather straightforward
consequence of Wedhorn's deus ex machina 
we obtain a classification of ``reductive monoid spaces,''
the only new definition that we offer in this paper.  

The organization of our paper is 
as follows. In Section~\ref{S:Notation},
we introduce what is required to explain 
spherical algebraic spaces in the following
order: 
Subsection~\ref{SS:Schemes} is on the fundamentals; 
we introduce the notion of scheme as a functor.
Subsection~\ref{SS:AlgebraicSpaces} sets up
the notation for algebraic spaces.
In Subsection~\ref{SS:GroupSchemes},
we talk about group schemes, which 
is followed by Subsection~\ref{SS:ReductiveGroups}
on reductive group schemes. 
The Subsection~\ref{SS:ParabolicSubgroups}
is devoted to parabolic subgroups.
In Subsection~\ref{SS:Actions},
we briefly discuss actions of group schemes.
In Section~\ref{S:AlgebraicMonoids}, we
discuss affine monoid schemes 
and prove a souped-up version of a 
result of Rittatore on the unit-dense 
algebraic monoids. 
The beginning of Section~\ref{S:FromSpherical}
is devoted to the review of colored fans.
In Subsection~\ref{SS:Quasiprojective},
we review the quasiprojectiveness 
criterion of Brion and mention Huruguen's
theorem on the $k$-forms of spherical
varieties over perfect fields. 
In Subsection~\ref{SS:WedhornsClassification},
we present Wedhorn's generalization
of Huruguen's results. In particular
we talk about his colored fans 
for spherical algebraic spaces. 
In the subsequent Section~\ref{S:ReductiveMonoids},
we introduce reductive monoid spaces 
and apply Wedhorn's and Huruguen's 
theorems.
Finally, we close our paper 
in Subsection~\ref{SS:LinedClosure} by 
presenting an application of our observations 
to the lined closures of representations
of reductive groups, whose 
geometry is investigated by 
De Concini in~\cite{DeConcini}.
\\

\textbf{Acknowledgement.}
I thank the organizers of the 
2017 Southern Regional Algebra Conference:
Laxmi Chataut, J\"org Feldvoss, Lauren Grimley, 
Drew Lewis, Andrei Pavelescu, and Cornelius Pillen.
I am grateful to J\"org Feldvoss, Lex Renner, Soumya Dipta Banerjee,
and to the anonymous referee for their very careful reading of the paper
and for their suggestions, which improved the quality of the article.
This work was partially supported by a grant from 
the Louisiana Board of Regents.

\section{Notation and preliminaries}\label{S:Notation}

The purpose of this section
is to set up our notation and 
provide some background 
on algebraic spaces, 
reductive $k$-groups, and 
reductive $k$-monoids.
We tried to give most of 
the necessary
definitions for explaining 
the logical dependencies. 
For standard algebraic geometry 
facts, we recommend
the books~\cite{DemazureGabriel} 
and~\cite{Knutson}.
(It seems to us that the 
Stacks Project~\cite{StacksProject}
will eventually replace all standard
references.)
In addition, we find that Brion's lecture notes
in~\cite{Brion:CliffordLectures}  
are exceptionally valuable as a resource
for the background on algebraic groups.
\vspace{.5cm}

{\em Notation: Throughout our paper 
$k$ will stand for a field, 
as $G$ does for a group.
$k$ is called a perfect field if 
every algebraic extension 
of $k$ is separable. 
}

\subsection{Schemes.}\label{SS:Schemes}

{\it 
A point that we want to make 
in this section is that the only 
way to know a group scheme 
is to know all (affine) group 
schemes related to it. In fact,
this statement is a theorem.}

\vspace{.25cm}

{\em Terminology:} 
Let $(X,\ms{F})$ be a pair of a 
topological space $X$ 
and a sheaf of 
rings $\ms{F}$ on $X$. If for each point 
$x\in X$, the corresponding stalk $\ms{F}_{x}$
is a local ring, then the pair 
$(X,\ms{F})$ is called a {\em locally ringed space}. 
An {\em affine scheme} is a locally ringed 
space which is of the form $(\Spec (R),\ms{O}_{\Spec (R)})$,
where $R$ is a commutative ring, and $\Spec (R)$ 
is its spectrum endowed with the Zariski topology. 
The sheaf $\ms{O}_{\Spec (R)}$ is the structure sheaf of $\Spec (R)$. 
A {\em scheme} is a locally ringed space $(X,\ms{O}_X)$ which 
has a covering by open subsets $X=\bigcup_{i\in I} U_i$ 
such that each pair $(U_i, \ms{O}_X \vert_{U_i})$
is an affine scheme.


There is a tremendous advantage 
of using categorical language while
studying a scheme in relation with 
others. Therefore, we proceed with  
the identification of schemes with their 
functors of points. In this regard, 
we will use the following standard 
notation throughout: 
\begin{table}[h]
\begin{tabular}{lll}
$\textbf{Obj}(\mc{C})$ &: & the class of 
objects of a category $\mc{C}$\\
$\textbf{Mor}(\mc{C})$ &: & the class of 
morphisms between the objects of $\mc{C}$\\
$\text{Mor}(X,Y)$ &: & the set of morphisms from $X$ to $Y$, where 
$X,Y\in \textbf{Obj}(\mc{C})$\\
$\mc{C}^{o}$  &:& the category opposite to $\mc{C}$\\
$\textbf{(schemes)}$ &: & the category of schemes \\
$\textbf{(sets)}$ &:& the category of sets 
\end{tabular}
\end{table}


The {\em functor of points} 
of a scheme $X$ is the functor 
$h_X : \textbf{(schemes)}^{o} \rightarrow \textbf{(sets)}$
that is defined by the following assignments:
\begin{enumerate}
\item if $Y \in \textbf{Obj}( \textbf{(schemes)}^{o})$, then 
$h_X(Y)=\mt{Mor}(Y,X)$;
\item if $f\in \mt{Mor}(Y,Z)$, then $h_X(f)$ is the set map 
\begin{align*}
h_X(f) :\ h_X(Z) &\longrightarrow h_X(Y)\\
g & \longmapsto f\circ g
\end{align*}
\end{enumerate}
Let $h$ denote the following natural transformation: 
\begin{align}\label{A:as functors}
h : \textbf{(schemes)} & \rightarrow 
\textbf{functors((schemes)$^{o}$, (sets))} \notag \\
X & \mapsto h_X
\end{align}
Here, $\textbf{functors(-,-)}$ stands for
the category whose objects are functors,
and its morphisms are the natural transformations
between functors.
It follows from Yoneda's Lemma that 
(\ref{A:as functors}) is an equivalence
onto a full subcategory of the target category. 
In other words, a scheme $X$ is uniquely
represented by its functor of points $h_X$. 
\vspace{.25cm}

{\em Terminology:} 
In the presence of a 
morphism $Y\rightarrow X$
between two schemes $X$ 
and $Y$, we will occasionally say  
that {\em $Y$ is a scheme over $X$}
or that {\em $Y$ is an $X$-scheme}.
If there is no danger for confusion, 
we will write $X(Y)$ for $h_X(Y)$, 
which is the set of all morphisms from $Y$ to $X$. 
If, in addition, 
$Y$ is an affine scheme of the form $Y=\Spec (R)$,
then we often write $X(R)$ instead of $X(Y)$, 
and we say that {\em $X$ is a scheme over $R$}.
\vspace{.5cm}

We want to show that 
the functor $h$ in (\ref{A:as functors}) behaves well upon restriction
to the category of schemes over an affine scheme.
To this end, let $R$ be a commutative ring, 
and denote the category of $R$-schemes 
by $\textbf{($R$-schemes)}$.
A {\em morphism} in this category is a commutative  
diagram of morphisms as in Figure~\ref{F:morphism}.

\begin{figure}[h]
\begin{center}
\begin{tikzpicture}[scale=1.25]
\begin{scope}[xshift=-2cm,yshift=0cm]
\node at (-1,0.5)  (1) {$X$};
\node at (1,0.5) (2) {$Y$};
\node at (0,-0.5) (3) {$\Spec(R)$};
\draw[->,thick] (1) to (2);
\draw[->,thick] (2) to (3);
\draw[->,thick] (1) to (3);
\end{scope}
\end{tikzpicture}
\caption{A morphism in $\textbf{($R$-schemes)}$.}
\label{F:morphism}
\end{center}
\end{figure}

It is well known that 
the category $\textbf{($R$-schemes)}$
is equivalent to the opposite category of commutative 
$R$-algebras, denoted by $\textbf{($R$-algebras)}^o$.
The highlight of this subsection is the following 
result whose proof easily follows from the definitions.

\begin{proposition}\label{P:as functors}
The functor 
$$
h: \textbf{($R$-schemes)} \longrightarrow 
\textbf{functors (($R$-algebras),(sets))},
$$
which is obtained from 
(\ref{A:as functors}) by restriction 
to the subcategory $\textbf{($R$-schemes)}$,
is an equivalence onto a full subcategory of 
the target category.
In particular, a scheme over $R$ is determined 
by the restriction of its functor of points to the
category of affine schemes over $R$. 
\end{proposition}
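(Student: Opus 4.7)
The plan is to show that the restricted functor $h: \textbf{($R$-schemes)} \to \textbf{functors(($R$-algebras),(sets))}$ is fully faithful; being an equivalence onto a full subcategory of the target then follows formally. Concretely, for any pair $X, Y$ of $R$-schemes I must establish that the canonical map
$$
\Phi: \mt{Mor}_R(X,Y) \longrightarrow \mt{Nat}\bigl(h_X\vert_{(R\text{-alg})},\; h_Y\vert_{(R\text{-alg})}\bigr)
$$
is a bijection. The underlying idea is the familiar one: a scheme is glued from its affine opens, so a morphism $X \to Y$ should be the same data as a compatible family of morphisms $U_i \to Y$ on an affine open cover of $X$. I would split the argument into injectivity and surjectivity of $\Phi$, each handled by descent to an affine open cover.

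For injectivity, fix an affine open cover $X = \bigcup_{i\in I} U_i$ with $U_i = \Spec(A_i)$; the ring $A_i$ inherits an $R$-algebra structure via the structural morphism $X\to \Spec(R)$. If $f, g \colon X\to Y$ satisfy $\Phi(f)=\Phi(g)$, then evaluating both sides at $A_i$ on the distinguished element $(U_i \hookrightarrow X) \in h_X(A_i)$ yields $f\vert_{U_i}=g\vert_{U_i}$ for every $i$. Since the $U_i$'s cover $X$, this forces $f=g$.

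For surjectivity, starting from a natural transformation $\eta\colon h_X\vert_{(R\text{-alg})}\to h_Y\vert_{(R\text{-alg})}$, I would define local pieces $f_i := \eta_{A_i}(U_i \hookrightarrow X)\colon U_i \to Y$ and then glue them together. The principal obstacle is the overlap compatibility: one covers $U_i\cap U_j$ by affine opens $V = \Spec(B)$ and uses naturality of $\eta$ applied to the two inclusions $V\hookrightarrow U_i$ and $V\hookrightarrow U_j$ (both regarded as elements of $h_X(B)$) to deduce $f_i\vert_V = f_j\vert_V$. Since the assignment $T\mapsto \mt{Mor}(T,Y)$ is a Zariski sheaf on any scheme, the $f_i$'s glue to a unique morphism $f\colon X\to Y$, which lies over $\Spec(R)$ by construction. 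A parallel verification — evaluating $\Phi(f)$ and $\eta$ at an arbitrary $R$-algebra $B$ and an arbitrary $\rho\colon \Spec(B)\to X$, then refining the pullback cover $\{\rho^{-1}(U_i)\}$ by affine opens so naturality can be applied — confirms $\Phi(f)=\eta$.

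The main technical hurdle, as in every argument of this shape, is managing the affine refinements of $U_i\cap U_j$ and of $\rho^{-1}(U_i)$ so that naturality can be invoked without ambiguity; once the sheaf property of $\mt{Mor}(-,Y)$ is in hand, everything else is bookkeeping. The concluding ``In particular'' clause of the proposition is then immediate: since $h$ is fully faithful, an $R$-scheme is recovered, up to canonical isomorphism, from its functor of points restricted to $(R\text{-algebras})$, i.e.\ from its restriction to affine $R$-schemes.
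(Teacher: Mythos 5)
Your proof is correct and is precisely the standard affine-cover/gluing argument that the paper has in mind when it states that the proposition's ``proof easily follows from the definitions'' (the paper itself supplies no further detail). The two key ingredients you isolate --- evaluating natural transformations on the tautological elements $(U_i\hookrightarrow X)\in h_X(A_i)$ and the Zariski sheaf property of $\mathrm{Mor}(-,Y)$ --- are exactly what is needed, so nothing is missing.
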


For our purposes, the 
most important consequence of 
Proposition~\ref{P:as functors} is 
that an $R$-scheme can be 
thought of as a sheaf of sets 
on the category of $R$-algebras 
with respect to the Zariski topology.
Closely related to this 
sheaf realization of schemes is 
the notion of an ``algebraic space.''
Concisely, and very roughly 
speaking, an algebraic space
is a sheaf of sets on the 
category of $R$-algebras 
with respect to the ``\'etale topology.''
We will give a more precise definition 
of algebraic spaces in the next subsection.
Next, we remind 
ourselves of some basic notions 
regarding the morphisms between
schemes.

\begin{enumerate}

\item 
A ring homomorphism $f: R\rightarrow S$
is called {\em flat} if the 
associated induced functor 
$$f_*:\textbf{($R$-modules)} \rightarrow \textbf{($S$-modules)}$$
is exact, that is to say, it 
maps short-exact sequences to short-exact sequences.
In this case, the morphism 
$f^*: \text{Spec}(S)\rightarrow \text{Spec}(R)$
is called flat. 
More generally, a map $f: X\rightarrow Y$ 
of schemes is called {\em flat}
if the induced functor 
$$
f^*: \textbf{(Quasicoherent sheaves on $Y$)}
\rightarrow \textbf{(Quasicoherent sheaves on $X$)}
$$
is exact.

\item A ring homomorphism $f: R\rightarrow S$ 
is called {\em of finite presentation} if $S$ is isomorphic 
(via $f$) to a finitely generated polynomial algebra over $R$
and the ideal of relations among the generators is finitely generated.
A map 
$f: X\rightarrow Y$ 
of schemes is called {\em a map of finite presentation at $x\in X$} 
if there exists an affine open neighborhood $U=\text{Spec}(S)$ 
of $x$ in $X$ and an affine open neighborhood $V=\text{Spec}(R)$
of $Y$ with $f(U)\subseteq V$ such that the induced ring map $R\rightarrow S$
is of finite presentation. 
More generally, a map 
$f: X\rightarrow Y$ 
of schemes is called {\em of locally finite presentation} if it is of 
finite presentation at every point $x\in X$.

\item A map $f: X\rightarrow Y$
of schemes is called {\em separated} if 
the induced diagonal morphism 
$X\rightarrow X\times_Y X$ is 
a closed immersion.

\item A map $f: X\rightarrow Y$
of schemes is called {\em unramified} if 
the induced diagonal morphism 
$X\rightarrow X\times_Y X$ is 
an open immersion.

\item A map 
$f: X\rightarrow Y$ 
of schemes is called {\em \'etale} if it is 
unramified, flat, and of locally finite presentation. 
\end{enumerate}

Associated with these types of maps, 
we have two important topologies.
These are 
\begin{enumerate}
\item The topology on the category of 
schemes associated with the \'etale
morphisms; this topology is called the {\em \'etale topology.} 
\item The topology on the category of 
schemes associated with the set of maps,
flat and of locally finite presentation;
this topology is called the {\em fppf topology.} 
\end{enumerate}

\subsection{Algebraic spaces.}\label{SS:AlgebraicSpaces}

Let $S$ be a scheme. 
An {\em algebraic space $X$ over $S$} 
is a functor 
$$
X: \textbf{($S$-schemes)}^o \longrightarrow 
\textbf{(sets)}
$$
satisfying the following properties: 
\begin{enumerate}
\item[(i)] $X$ is a sheaf in the fppf topology.
\item[(ii)] The diagonal morphism 
$X\rightarrow X\times_S X$ is representable
by a morphism of schemes.
\item[(iii)] There exists
a surjective \'etale
morphism $\widetilde{X}\rightarrow X$,
where $\widetilde{X}$ is an $S$-scheme.
\end{enumerate}
It turns out that, in this definition, 
specifically in part (i),  
replacing the fppf topology by the \'etale topology on 
the category of schemes does not cause any harm. 
In other words, the resulting functor
describes the same algebraic space 
(see~\cite[\href{http://stacks.math.columbia.edu/tag/0123}{Tag 076L}]{StacksProject}).
Let us mention in passing that 
the category of schemes is a 
full subcategory of the category of algebraic spaces.

We will follow the 
standard assumption,
as in~\cite{Wedhorn} (and 
as in~\cite{Knutson}), that 
all algebraic spaces 
are quasiseparated over 
some scheme, hence
they are Zariski locally 
quasiseparated. This implies
that our algebraic spaces are reasonable and decent 
in the sense of~\cite[\href{http://stacks.math.columbia.edu/tag/0123}{Tag 03I8}]{StacksProject} 
and~\cite[\href{http://stacks.math.columbia.edu/tag/0123}{Tag 03JX}]{StacksProject}.

Our final note in this subsection
is that if $k$ is a separably
closed field, then an algebraic space over $k$ 
is a $k$-scheme. 

\subsection{Group schemes.}\label{SS:GroupSchemes}

Let $R$ be a commutative ring.
A {\em group scheme} over $R$ is an $R$-scheme
whose functor of points factors through 
the forgetful functor from the category 
of groups to the category of sets. 
In a nutshell, an $R$-scheme $G$ 
is called a group scheme over $R$ 
if for every $R$-scheme $S$, 
there is a natural group structure 
on $G(S)$ which is functorial with respect
to the morphisms $R\rightarrow S$.

\vspace{.5cm}

{\em Terminology:} If $G$ is a 
(group) scheme over $R$,
then the morphism $G\rightarrow \Spec(R)$
is called the {\em structure morphism}.

\begin{definition}
Let $k$ be a field and let $G$ 
be a group scheme over $k$. 
We will call $G$ a {\em $k$-group} 
(or, an {\em algebraic group over $k$}) 
if $G$ is of finite type as a scheme
over $k$. 
If $k'$ is a subfield 
of $k$, then $G$ is said to be 
{\em defined over $k'$} if $G$,
as a scheme, and all of 
its group operations
as morphisms are defined
over $k'$. 
\end{definition}

Let $S$ be a scheme, and let $G$ 
be a group scheme over $S$. 
$G$ is called {\em affine, smooth, flat,} or  
{\em separated}, respectively, if 
the structure morphism $G\rightarrow S$
is affine, smooth, flat, or separated,
respectively. 
We have some remarks 
regarding these properties:

\begin{enumerate}
\item Any $k$-group $G$ is separated 
as an algebraic scheme. Indeed,
the diagonal in $G\times G$ is closed
as being the inverse image of 
the ``identity'' point of $G(k)$  
under the morphism 
$t: G\times G \rightarrow G$
defined by 
$t(g,h)= gh^{-1}$.

\item Let $k$ be a perfect field, and let 
$G$ be a $k$-group scheme.
If the underlying scheme of $G$ 
is reduced, that is to say, the 
structure ring has no nilpotents,
then $G$ is smooth (see~\cite[Pg 287]{DemazureGabriel}).

\item 
The Cartier's theorem states that if the characteristic
of the underlying field $k$ is 0,
then a $k$-group scheme is reduced 
(see~\cite[Pg 101]{Mumford}).
It follows that, in characteristic 0,
all $k$-groups are smooth.

\end{enumerate}

\vspace{.5cm}

Next, we will define a particular $k$-group
that is of fundamental importance
for the whole development of algebraic
group theory.

\begin{example}
Let $V$ be a finite dimensional 
vector space over $k$. 
The {\em general linear group}, 
denoted by $GL(V)$, 
is the group functor such that 
$$
GL(V) (S) = \text{ the automorphism 
group of the sheaf of 
$\mc{O}_S$-modules 
$\mc{O}_S \otimes_k V$},
$$
where $S$ is a $k$-scheme. 
By choosing a basis 
for $V$, we see that $GL(V)(S)$ is 
isomorphic to the group of invertible 
$n\times n$ matrices with 
coefficients in the algebra $\mc{O}_S(S)$, 
hence, $GL(V)$ is 
represented by the open affine scheme 
$$
\mt{GL}_n=\{P\in \A^{n^2} :\ \det P \neq 0 \}.
$$
It follows that $GL(V)$ is 
smooth and connected. 
\end{example}

\begin{definition}
A group scheme $G$ is called {\em linear} if it is 
isomorphic to a closed subgroup scheme 
of $\mt{GL}_n$ for some positive integer $n$.
If $V$ is a vector space, then a homomorphism 
$\rho : G\rightarrow GL(V)$, which is a morphism
of schemes, is called a {\em linear 
representation} of $G$ on $V$.  
In this case, $V$ is called a {\em $G$-module}.
\end{definition}

Note that if a $k$-group scheme is linear, 
then it is an affine scheme. 
Note conversely that, every $k$-group scheme 
which is affine and of finite type is a linear group scheme 
(see~\cite[Proposition 3.1.1]{Brion:CliffordLectures}).

\vspace{.5cm}

In some sense, atomic pieces of 
$k$-groups are given by the following 
two very special $k$-groups:

\begin{example}
Let $+$ and $\cdot$ denote the 
addition and the multiplication
operations on the field $k$.  
\begin{itemize}
\item The additive 1-dimensional 
$k$-group, denoted by $\mbf{G}_a$, is the 
affine line $\mathbb{A}^1_k$ considered with the 
group structure $(k,+)$. 
\item The multiplicative 1-dimensional
$k$-group, denoted by $\mbf{G}_m$, 
is $\mathbb{A}_k^1 - \{0\}$ considered 
with the group structure $(k^*,\cdot)$. 
\end{itemize}
\end{example}

\begin{definition}
A $k$-group $G$ is called 
a torus if there exists an isomorphism 
$\zeta: G \rightarrow \mbf{G}_m\times 
\cdots \times \mbf{G}_m$
($n$ copies, for some $n\geq 1$) 
over some field containing $k$. 
Assuming that $G$ is a torus defined over $k'$,
and that $k'$ is a subfield of $k$, $G$ is called $k$-split 
if both of $G$ and $\zeta$ are defined over $k$. 
\end{definition}

A modern proof of the following 
basic result can be found in
~\cite[Appendix A]{Conrad:Reductive}.

\begin{theorem}[Grothendieck]
Let $k'$ and $k$ be two fields such that $k'\subset k$. 
Let $G$ be a smooth connected 
affine $k$-group. If $G$ is defined over $k'$, 
then $G$ contains a maximal $k'$-split torus $T$
such that $T(\overline{k})$ is a maximal torus of $G(\overline{k})$. 
\end{theorem}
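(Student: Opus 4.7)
The plan is to combine descent from $k$ to $k'$ with the scheme-theoretic existence theorem for maximal tori, and then to extract the $k'$-split part. Since $G$ is defined over $k'$, fix a smooth connected affine $k'$-group scheme $G_0$ with $G_0 \otimes_{k'} k \cong G$. Any closed $k'$-subgroup scheme $T_0 \subset G_0$ then yields a closed $k$-subgroup $T := T_0 \otimes_{k'} k \subset G$; moreover, under a compatible embedding $\overline{k'} \hookrightarrow \overline{k}$ one has $T(\overline{k}) = T_0(\overline{k'})$, and the property of being split over $k'$ is preserved by this base change. Thus it suffices to produce the desired torus $T_0 \subset G_0$ over $k'$.

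Next I would invoke Grothendieck's existence theorem in the sharp scheme-theoretic form (as in Conrad's appendix, which is the reference cited immediately after the statement): every smooth connected affine group scheme over a field contains a maximal torus defined over that field. Applied to $G_0$, this gives a $k'$-torus $T_1 \subset G_0$ such that $(T_1)_{\overline{k'}}$ is a maximal torus of $(G_0)_{\overline{k'}} = G_{\overline{k}}$. From $T_1$ I would extract its canonical maximal $k'$-split subtorus, obtained via the character-lattice antiequivalence as the subtorus corresponding to the largest quotient of $X^*(T_1)$ on which $\mathrm{Gal}(k^{\mathrm{sep}}/k')$ acts trivially; call this subtorus $T_0$. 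The base change $T := T_0 \otimes_{k'} k$ is then a $k'$-split subtorus of $G$ by construction.

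The main obstacle, and the step I expect to carry the weight of the proof, is the simultaneous verification of the two required properties of $T$: that it is maximal among $k'$-split subtori of $G$, and that $T(\overline{k})$ is a maximal torus of $G(\overline{k})$. The first follows from the Borel--Tits-type conjugacy of maximal $k'$-split tori in $G_0$ under $G_0(k')$ together with a rank comparison, once one knows that conjugacy holds scheme-theoretically. The second is the genuinely delicate requirement, since in general the inclusion $(T_0)_{\overline{k}} \subset (T_1)_{\overline{k}}$ may be strict: passing from a maximal $k'$-torus to its maximal $k'$-split part typically drops the geometric rank. Reconciling the two conditions therefore requires exploiting the freedom to replace $T_1$ within its $G_0(k')$-conjugacy class of maximal $k'$-tori so as to maximize the $k'$-split rank, and then arguing via the $\overline{k}$-conjugacy of maximal tori that the resulting $T_0$ remains geometrically maximal. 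This compatibility is the crux of the theorem, and it is where the proof is most sensitive: additional structural input on $G$ over $k'$ (for instance, quasi-splitness or isotropy, or a reinterpretation identifying $k'$-split with merely defined over $k'$) is the natural place to appeal to in order to close the argument.
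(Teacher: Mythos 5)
The paper contains no internal proof of this theorem at all: the sentence preceding the statement simply refers the reader to \cite[Appendix A]{Conrad:Reductive}, whose main result is precisely what you invoke in your second paragraph --- every smooth affine group over a field $F$ contains a maximal $F$-torus, i.e.\ an $F$-torus that remains maximal after base change to $\overline{F}$. So your first two steps (descend to a $k'$-model $G_0$, apply Grothendieck's theorem over $k'$ to get $T_1 \subset G_0$) already reproduce everything the paper has to offer; had you stopped there and set $T = T_1 \otimes_{k'} k$, you would have been done.

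The genuine gap is everything after that, and it cannot be closed, because the statement with ``$k'$-split'' read literally is false. If $G_0$ is a nontrivial $k'$-anisotropic torus --- say the norm-one torus $\ker\bigl(N \colon R_{k''/k'}\,\mathbf{G}_m \rightarrow \mathbf{G}_m\bigr)$ of a separable quadratic extension $k''/k'$ --- then its only $k'$-split subtorus is trivial while its geometric rank is $1$, and since $G_0$ is commutative no conjugation maneuver is available. Your proposed repair also fails on its own terms: conjugation by an element of $G_0(k')$ is a $k'$-automorphism of $G_0$, so every $G_0(k')$-conjugate of $T_1$ has the \emph{same} $k'$-split rank --- nothing can be ``maximized'' by moving within the conjugacy class --- and in any case maximal $k'$-tori are generally not $G_0(k')$-conjugate (only maximal $k'$-\emph{split} tori are; maximal tori are conjugate only over $k_s$ or $\overline{k}$). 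A $k'$-split torus of $G$ that is geometrically maximal exists precisely when $G_0$ contains a split maximal torus, which is the ``additional structural input'' your closing sentence asks for and which is not available here. The correct reading, consistent with the cited source and with Borel's classical statement (see~\cite[18.2]{Borel}), is that $T$ is a maximal torus \emph{defined over} $k'$, not split over $k'$; note that the paper itself corroborates this, since the very next theorem (Ono's) treats ``a maximal torus is $k'$-split'' as a nontrivial condition equivalent to the existence of a Borel subgroup over $k'$ --- a condition that would be vacuous if Grothendieck's theorem literally produced a geometrically maximal $k'$-split torus. With that reading, your first two paragraphs constitute a complete proof and the third should simply be deleted.
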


\subsection{Reductive group schemes.}\label{SS:ReductiveGroups}

{\em Almost any fact about 
reductive group schemes 
can be found in Conrad's 
SGA3 replacement 
~\cite{Conrad:Reductive}.}

\vspace{.5cm}

Let $G$ be a $k$-group, and let 
$\rho : G \hookrightarrow GL(V)$ 
be a finite dimensional faithful 
linear representation. 
An element $g$ from $G$ 
is called {\em semisimple} 
(respectively {\em unipotent}) 
if the linear operator $\rho(g)$ 
on $V$ is diagonalizable 
(respectively, unipotent). 
It is not difficult to check that 
these notions (semisimplicity 
and unipotency) are independent 
of the faithful representation,
and they are preserved under 
$k$-homomorphisms. 
Therefore, the following
 definition is unambiguous:

\begin{definition}
Let $G$ be a linear algebraic $k$-group. 
A $k$-subgroup $U$ of $G$ is called 
{\em unipotent} if every element of $U$ is unipotent. 
\end{definition}

Next, we give the definition of 
a reductive group. 
However, we will do this 
in the opposite of the
chronological development
of the subject to emphasize 
the differences. So,  
let us start with the 
definition of the relative reductive
group schemes. This is 
most useful for studying 
properties that are preserved 
in families over a commutative 
ring.

\vspace{.5cm}

Let $S$ be a scheme, 
and let $G$ be a smooth 
$S$-affine group scheme over $S$. 
Let $s$ be a point from $S$, and 
denote by $G_{\overline{s}}$ 
the geometric fiber
$G\times_S \Spec(\overline{k(s)})$ 
of $G\rightarrow S$.  
Here, $k(s)$ is the residue field of $s$. 
If for each $s\in S$ the fiber 
$G_{\overline{s}}$ is a 
connected reductive group, then 
$G$ is called a {\em reductive $S$-group}. 

\vspace{.5cm}

So, what is a reductive $k$-group 
over an algebraically closed field? 
We take this opportunity 
to define the `unipotent radical' and 
the `radical' of an algebraic group.
Let $G$ be a $k$-group (affine or not). 
There is a maximal 
connected solvable normal
linear algebraic subgroup, 
denoted by 
$\ms{R}(G(\overline{k}))$, and 
it is called the radical of $G$ 
(see~\cite[Lemma 3.1.4]{Brion:CliffordLectures}).
If $\ms{R}(G(\overline{k}))$ is 
trivial, then $G(\overline{k})$ is called semisimple.
The {\em unipotent radical} of $G(\overline{k})$,
denoted by $\ms{R}_{u}(G(\overline{k}))$,
is the maximal connected normal 
unipotent subgroup of 
$G(\overline{k})$. 
If $\ms{R}_{u}(G(\overline{k}))$
is trivial, then $G(\overline{k})$ is called 
reductive. 
Clearly, semisimplicity implies reductivity. 
Notice also that
we have no connectedness 
assumption here. 
If the characteristic of $k$ is 0, then the property of 
reductiveness of the identity component
of $G$ is equivalent to the semisimplicity of 
all linear representations of $G$. 
This equivalence fails in positive characteristics  
(see~\cite[Remark 1.1.13.]{Conrad:Reductive}).

\vspace{.5cm}

In the rest of our paper 
we will focus mainly on 
the ``relative'' reductive group schemes
over fields.
\begin{definition}\label{D:connectedreductive}
Let $k$ be a field. A 
$k$-group $G$ is called 
{\em reductive} if the geometric
fiber $G_{\overline{k}}$
(which we take to be equal to  
$G(\overline{k})$)
is a connected reductive
group in the sense
of the previous paragraph.
\end{definition}

\subsection{Parabolic subgroups.}\label{SS:ParabolicSubgroups}

Let $G$ be a reductive $k$-group.
A subgroup $P (\overline{k})$ of 
$G(\overline{k})$ is called {\em parabolic} 
if $G(\overline{k})/P(\overline{k})$
has the structure of a projective variety. 
More generally, a smooth affine $k$-subgroup $P$ 
of $G$ is called a {\em parabolic subgroup} if 
$P(\overline{k})$ is a parabolic subgroup
of $G(\overline{k})$.

A {\em Borel subgroup} in $G(\overline{k})$ 
is a maximal connected solvable subgroup. 
More generally, 
a parabolic $k$-subgroup $B$ 
of $G$ is called a {\em Borel subgroup} 
if $B(\overline{k})$
is a Borel subgroup in $G(\overline{k})$.

A fundamentally important 
result that is due to  
Borel (see~\cite[Theorem 11.1]{Borel}) 
states that any two Borel subgroups 
are conjugate in $G(k)$, 
and furthermore, 
for any Borel 
$k$-subgroup $B(k)$ 
the quotient $G(\overline{k})/B(\overline{k})$ 
is projective. 
Of course, according to the above 
definition of parabolic $k$-subgroups,
a Borel subgroup in $G(k)$ is a 
(minimal) parabolic $k$-subgroup.

\begin{example}
The upper triangular 
subgroup $\mt{T}_n$ 
of $\mt{GL}_n$ 
is a Borel subgroup. 
As a consequence of 
the Lie-Kolchin Theorem 
(see~\cite[Corollary 10.2]{Borel}) 
any connected solvable 
group admits
a faithful representation 
with image in $\mt{T}_n$. 
\end{example}

It may happen that $G(k)$ 
is defined over a field but 
has no nontrivial Borel subgroup. 
This holds true, 
even for some classical groups, 
as we will demonstrate 
in the next classic example 
(from~\cite{Borel}).

\begin{example}
Let $k$ be a field whose 
characteristic is not 2, and 
let $V$ be $k$-vector space.
Let $Q$ be a nondegenerate 
quadratic form on $V$, and 
let $F$ denote its
symmetric bilinear form.
We assume that $Q$ is 
{\em isotropic}, that is to say,
there exists a nonzero vector 
$v\in V$ such that $Q(v)=0$. 
A subspace 
is called {\em isotropic} if it contains 
an isotropic vector; a subspace
is called {\em anisotropic} if it contains 
no nonzero isotropic vector; a
subspace is called {\em totally 
isotropic} if it consists of isotropic
vectors only. 
A {\em hyperbolic plane}
is a two-dimensional 
subspace $E$ of $V$ with a 
basis $\{e,f\}$ with respect to which 
the restriction of $F$ has the form 
$F(x_1 e + x_2 f, y_1 e + y_2 f ) =
x_1 y_2+x_2y_1$.

By the Witt's Decomposition Theorem
we know that the dimension $q$
of a maximal totally isotropic
subspace is an invariant of 
$Q$. More precisely, it states that 
$V$ contains $q$ linearly
independent hyperbolic planes
$H_1,\dots, H_q$, and $V$ is an  
orthogonal direct sum of the form 
$$
V\cong V_o\oplus \bigoplus_{i=1}^q H_i,
$$
where $V_o$ is an anisotropic 
subspace. Let $Q_o$ denote 
the restriction of $Q$ to $V_o$.

For $i=1,\dots, q$
we choose a basis 
$\{e_i,e_{n-q+i}\}$ for $H_i$ 
in such a way that 
the following identities 
are satisfied:
$$
F(e_i,e_i)=F(e_{n-q+i},e_{n-q+i}) = 0\ \text { and } \
F(e_i, e_{n-q+i})=1.
$$
Here $n$ is the dimension 
of $V$.
Let $\{ e_j :\ j = q+1,\dots, n-q\}$ denote 
a basis for $V_o$. 
For each pair $e_i,e_{n-q+i}$ 
($i=1,\dots,q$) of basis 
elements and $x\in \overline{k}$ 
we have a linear 
map $s_i(x): V(\overline{k}) \rightarrow V(\overline{k})$ 
defined by 
$$
s_i(x) e_i = x e_i,\ \, \, s_i(x) e_{n-q+i} = x^{-1} e_{n-q+i},\ \
\text{ and }\ s_i(x) f = f \ \text{ for }\ f \in H_i^{\perp}.
$$
Clearly, the $s_i(x)$'s are semisimple 
and generate a diagonal torus,
denoted by $T$, 
whose elements expressed 
in the basis $\{e_1,\dots, e_n\}$ of $V$ 
are of the form 
$$
\text{diag} ( a_1,\dots, a_q, 1,\dots, 1,a_1^{-1},\dots, a_q^{-1}), 
\ \ \text{ where } a_i \in \overline{k}.
$$
If $G$ denotes  
$\mathbf{SO}(Q)$, the $k$-group 
consisting of linear 
automorphisms of $V$ that preserves 
the quadratic form $Q$ and of determinant
1, then $S:=T\cap G$ is in fact a 
maximal torus of $G$. 
Note that the centralizer of $S$ in $G$,
denoted by $\ms{Z}_G(S)$, is isomorphic to 
the product $S\times \mathbf{SO}(Q_o)$.

The group $G$ is a reductive $k$-group,
and since $S$ is a maximal torus, 
by definition, $\ms{Z}_G(S)$ 
is a Cartan subgroup of $G$.
Therefore $\ms{Z}_G(S)$ is 
contained in a minimal 
parabolic $k$-subgroup $P$ of $G$.
In fact, $\ms{Z}_G(S)$ is equal 
to the Levi component of $P$. 
Observe that, if $n> 2q+2$, 
then $\mathbf{SO}(Q_o)$
is not commutative, hence 
$\ms{Z}_G(S)$ is not
contained by a Borel subgroup. 
We conclude that 
if $n>2q+2$, then $G$ does not 
have any Borel subgroups.

\end{example}

For connected semisimple 
$k$-groups, where $k$ is a perfect field,
the question of the existence 
of Borel subgroups has a nice
answer.
\begin{theorem}[Ono]
Let $k'$ and $k$ be two 
fields such that
$k'\subset k$ and $k'$ is perfect. 
If $G$ is a connected semisimple 
$k$-group, then $G$ posses
a Borel subgroup $B$ defined 
over $k'$ 
if and only if a maximal torus 
$T$ of $G$ is 
$k'$-split. In this case, 
all Borel subgroups containing 
$T(k')$ are conjugate by the 
elements of 
the group $N_G(T)(k')$, 
where $N_G(T)$
denotes the normalizer 
group of $T$ in $G$.
\end{theorem}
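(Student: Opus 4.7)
The plan is to argue throughout by Galois descent. Set $\varGamma := \mathrm{Gal}(k^{\mathrm{sep}}/k')$; since $k'$ is perfect, a closed $k$-subvariety of $G$ descends to $k'$ precisely when it is $\varGamma$-stable. I will translate each of the two conditions in the theorem into a statement about the $\varGamma$-action on the root datum $(X^*(T), \Phi, X_*(T), \Phi^\vee)$ of a maximal torus $T$ of $G$, and then compare them.

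For the ``if'' direction, suppose $T\subset G$ is a maximal torus that is $k'$-split. Then $\varGamma$ acts trivially on the character lattice $X^*(T)$, and in particular fixes every element of the root system $\Phi(G,T)\subset X^*(T)$. A Borel subgroup $B$ containing $T$ is uniquely determined by its set of positive roots $\Phi^+\subset\Phi$; since $\varGamma$ fixes $\Phi$ pointwise, it fixes any such $\Phi^+$, so the corresponding $B$ is $\varGamma$-stable and hence defined over $k'$.

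For the ``only if'' direction, suppose $B$ is a Borel subgroup defined over $k'$. First, one extracts a maximal torus $T\subset B$ defined over $k'$: this uses that $B$ is a smooth connected solvable $k'$-group together with Grothendieck's theorem recalled earlier. The $\varGamma$-action on $X^*(T)$ preserves the root system $\Phi$ and fixes the positive subsystem $\Phi^+$ cut out by $B$; since only the identity element of the Weyl group $W(G,T)$ preserves $\Phi^+$, this action factors through the group of automorphisms of the \emph{based} root datum $(X^*(T),\Phi,\Phi^+)$. The hard part will be to rule out a nontrivial residual diagram automorphism, i.e.\ the quasi-split-but-not-split possibility, using connected semisimplicity of $G$ and perfectness of $k'$; this is the main technical obstacle and the real content of Ono's result.

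Finally, for the conjugacy assertion, once $T$ is known to be $k'$-split, the Borel subgroups of $G$ containing $T$ form a torsor under the (now constant) Weyl group $W(G,T) = N_G(T)/T$. By Hilbert 90 applied to the split torus $T$, the natural map $N_G(T)(k') \twoheadrightarrow W(G,T)$ is surjective, so any two Borel subgroups containing $T(k')$ are conjugate by some element of $N_G(T)(k')$, as claimed.
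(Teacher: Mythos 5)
The paper offers no argument of its own here (the ``proof'' is the citation to Ono), so there is nothing internal to compare your route against; I can only judge the proposal on its merits. Your ``if'' direction and your treatment of the conjugacy assertion are sound: when $T$ is $k'$-split, $\varGamma$ acts trivially on $X^*(T)$, hence fixes every positive system and therefore every Borel subgroup containing $T$, which then descends to $k'$ since $k'$ is perfect; and the surjectivity of $N_G(T)(k')\to W(G,T)$, obtained from $H^1(k',T)=0$ for a split torus via Hilbert 90 together with the simply transitive action of $W(G,T)$ on the Borel subgroups containing $T$, gives the conjugacy statement.

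The ``only if'' direction, however, is not proved: you explicitly defer ``the hard part,'' namely showing that the image of $\varGamma$ in $\mathrm{Aut}(X^*(T),\Phi,\Phi^+)$ contains no nontrivial diagram automorphism. This is a genuine gap, and moreover the step you are deferring cannot be filled from the stated hypotheses: an action of $\varGamma$ preserving a positive system may well act by a nontrivial automorphism of the based root datum, in which case $T$ is not split. Concretely, let $G=\mathbf{SU}_3$ attached to a separable quadratic extension $E/k'$ and an isotropic hermitian form (for instance $\mathbf{SU}(2,1)$ over $k'=\R$, which is perfect): the stabilizer of an isotropic flag is a Borel subgroup defined over $k'$, yet $G$ is an outer form of $\mathrm{SL}_3$ and possesses no $k'$-split maximal torus. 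So under the natural existential reading of ``a maximal torus $T$ of $G$ is $k'$-split,'' the forward implication fails for types $A_n$, $D_n$, $E_6$; what your argument actually establishes is only that $G$ is quasi-split over $k'$, which is strictly weaker than split. To salvage the argument you must either restrict to groups whose Dynkin diagram has no nontrivial automorphisms (where your reasoning does force the Galois action on $X^*(T)\otimes\Q$, hence on $X^*(T)$, to be trivial), or reinterpret the statement as one about a fixed maximal torus $T$ defined over $k'$, where the correct criterion for $T$ to lie in a $k'$-rational Borel subgroup is that the Galois image in $\mathrm{Aut}(X^*(T))$ stabilize some Weyl chamber, not that it be trivial. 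As written, the proposal identifies the obstruction correctly but mistakes a false statement for a hard one.
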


\begin{proof}
See~\cite{Ono}.
\end{proof}

Finally, we finish this section
by mentioning 
another important related result.

\begin{theorem}[Chevalley]
The parabolic subgroups 
of any connected 
linear algebraic $k$-group 
are connected, and moreover, 
the normalizer of a parabolic 
subgroup in $G(k)$ 
is equal to $P(k)$. 
\end{theorem}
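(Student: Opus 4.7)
The plan is to reduce both assertions to the case of an algebraically closed base, and then build everything from two classical inputs: Borel's fixed point theorem, and the self-normalization $N_{G}(B)=B$ of Borel subgroups of a connected linear algebraic group (both of which I would invoke as citations to Borel's book rather than reprove).

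First I would replace $k$ by $\overline{k}$: connectedness of $P$ is a geometric property of the underlying scheme, and the equality of $k$-points $N_{G}(P)(k)=P(k)$ follows from the corresponding equality on $\overline{k}$-points by taking Galois invariants (using that $P$ is defined over $k$). Next, applying Borel's fixed point theorem to a Borel subgroup $B$ of $G$ acting on the projective variety $G(\overline{k})/P(\overline{k})$ yields a fixed coset $gP$, so $g^{-1}Bg\subseteq P$; after replacing $B$ by this conjugate, we may assume $B\subseteq P$. Writing $P^{0}$ for the identity component of $P$, connectedness of $B$ forces $B\subseteq P^{0}$.

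The heart of the argument is the claim $N_{G}(P^{0})=P^{0}$. Given $g\in N_{G}(P^{0})$, the group $gBg^{-1}$ is again a Borel subgroup of the connected linear algebraic group $P^{0}$. By the conjugacy of Borel subgroups inside $P^{0}$, there exists $h\in P^{0}$ with $hgBg^{-1}h^{-1}=B$, whence $hg\in N_{G}(B)$. Invoking $N_{G}(B)=B$, one concludes $hg\in B\subseteq P^{0}$ and therefore $g\in P^{0}$, as wanted.

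The theorem then follows at once. The inclusion $P\subseteq N_{G}(P^{0})=P^{0}$ (trivially, $P$ normalizes its own identity component) gives $P=P^{0}$, which is the connectedness of $P$. Since $P^{0}$ is a characteristic subgroup of $P$, every element of $N_{G}(P)$ also normalizes $P^{0}$, so $N_{G}(P)\subseteq N_{G}(P^{0})=P^{0}=P$. The main obstacle is really the input $N_{G}(B)=B$: this self-normalization is a nontrivial structural result that one typically extracts from the Bruhat decomposition, or from a direct analysis of the normalizer of the unipotent radical of $B$, and I would simply cite it here.
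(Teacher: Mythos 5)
Your proposal is correct. Note, however, that the paper does not actually prove this statement: its ``proof'' consists of the single line ``See [Borel, Section 11],'' so there is no in-paper argument to compare against. What you have written is precisely the classical argument one finds in that reference (and in Humphreys and Springer): Borel's fixed point theorem applied to $G/P$ to arrange $B\subseteq P$, the self-normalization $N_G(B)=B$, conjugacy of Borel subgroups inside the connected group $P^{0}$ to get $N_G(P^{0})=P^{0}$, and then both assertions fall out since $P$ normalizes its own identity component. Two small points worth tightening. First, in the reduction to $\overline{k}$, the Galois-invariance argument for the normalizer should be run over the separable closure $k_s$ rather than $\overline{k}$ when $k$ is imperfect (the paper itself is careful to distinguish $k_s$ from $\overline{k}$ elsewhere), or else phrased simply as $N_{G}(P)(\overline{k})\cap G(k)=P(\overline{k})\cap G(k)=P(k)$, which needs no Galois theory at all since $P$ is a closed $k$-subscheme. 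Second, when you say ``$gBg^{-1}$ is again a Borel subgroup of $P^{0}$,'' you are implicitly using that a Borel subgroup of $G$ contained in a closed connected subgroup $H$ is a Borel subgroup of $H$; this is true (maximality among connected solvable subgroups passes to $H$) but deserves a word. With those remarks, the argument is complete modulo the cited input $N_G(B)=B$, which is indeed the genuinely nontrivial ingredient.
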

\begin{proof}
See~\cite[Section 11]{Borel}.
\end{proof}

\subsection{Actions of group schemes.}\label{SS:Actions}

Let $G$ be a group scheme over $S$. 
An action of $G$ on an $S$-scheme $X$ 
is an $S$-morphism
$$
a : G\times_S X \rightarrow X
$$
such that for any $S$-scheme $T$ the 
morphism $a(T): G(T) \times X(T) \rightarrow X(T)$ 
is an action of $G(T)$ on $X(T)$. 
In this case, $X$ is called a $G$-scheme. 
If the action map $a$ is clear from the context 
we will denote $a(g,x)$ by $g\cdot x$, 
where $g\in G,\ x\in G(S)$.

\vspace{.5cm}

Let $X$ be a $G$-scheme with respect to 
an action $a: G\times X\rightarrow X$.
If $X$ is an affine scheme, 
then for any scheme $S$ 
and $g\in G(S)$ we have an
$\mc{O}_S$-algebra automorphism 
defined as follows:
\begin{align*}
\rho(g):\ \mc{O}_S\otimes_k \Gamma(X,k) 
& \longrightarrow \mc{O}_S\otimes_k \Gamma(X,k) \\
1\otimes f (-) )&\longmapsto 1\otimes ( f \circ a(g^{-1}, -)),
\end{align*}
where $\Gamma(X,k) $ is the global section functor 
applied to $X$.
These automorphisms patch up to give 
a linear representation 
\begin{align}\label{A:linearrep}
\rho: G\rightarrow GL(\Gamma(X,k) ).
\end{align}

\begin{remark}\label{R:Sumihiro}
If $X$ is a $G$-scheme of finite type, 
then the representation (\ref{A:linearrep})
decomposes $\Gamma(X,k)$ 
into a union of finite 
dimensional $G$-submodules 
(\cite[Proposition 2.3.4]{Brion:CliffordLectures}).
Furthermore, if $X$ is affine, 
then there exists a closed 
$G$-equivariant immersion of 
$X$ into a finite dimensional 
representation of $G$ 
(\cite[Proposition 2.3.5]{Brion:CliffordLectures}).
\end{remark}

We close this section by 
mentioning an important
theorem of Brion on 
projective equivariant 
embeddings of algebraic groups.
Let $G$ be a $k$-group,  
and let $H\subseteq G$
be a subgroup scheme. 
An {\em equivariant compactification} of the 
homogenous space $G/H$ is a proper $G$-scheme
$X$ equipped with an open equivariant immersion
$G/H\rightarrow X$ with schematically
dense image. 

\begin{theorem}[Brion]\label{T:projectiveembedding}
Let $G$ be a $k$-group,  
and let $H\subseteq G$
be a subgroup scheme. 
Then $G/H$ has an equivariant
compactification by a 
projective scheme. 
\end{theorem}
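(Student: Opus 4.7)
The plan is, when $G$ is affine, to construct a $G$-equivariant locally closed immersion $G/H \hookrightarrow \mathbb{P}(V)$ into the projectivization of a finite-dimensional linear representation, and then to take the scheme-theoretic closure inside $\mathbb{P}(V)$. The general case will be reduced to this via Chevalley's structure theorem. The technical core in the affine case is a Chevalley-type existence theorem: a finite-dimensional representation $\rho : G \to GL(V)$ together with a line $L \subseteq V$ whose scheme-theoretic stabilizer in $G$ coincides with $H$.

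To produce such $(V,L)$ in the affine case, I would apply Remark~\ref{R:Sumihiro} to the regular action of $G$ on itself: the coordinate algebra $\Gamma(G,\mathcal{O}_G)$ decomposes as a union of finite-dimensional $G$-submodules, so one can choose a finite-dimensional $G$-submodule $W \subset \Gamma(G,\mathcal{O}_G)$ that contains a finite set of generators of the vanishing ideal $I_H$ of $H$ in $G$. Setting $d := \dim_k(W \cap I_H)$, $V := \bigwedge^d W$, and $L := \bigwedge^d(W \cap I_H) \subseteq V$, a direct computation using that $W \cap I_H$ generates $I_H$ shows that $H$ equals $\mathrm{Stab}_G(L)$ as a closed subgroup scheme.

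Given $(V,L)$, the orbit morphism $G \to \mathbb{P}(V)$, $g \mapsto g \cdot [L]$, is $G$-equivariant and factors through a $G$-equivariant morphism $\iota : G/H \to \mathbb{P}(V)$. The stabilizer identity forces $\iota$ to be a monomorphism; combined with the standard fact that orbits of algebraic group actions on schemes of finite type are locally closed, $\iota$ is a locally closed immersion. Taking $X$ to be the scheme-theoretic closure of $\iota(G/H)$ in $\mathbb{P}(V)$ yields a projective $G$-scheme (closed in $\mathbb{P}(V)$, and $G$-stable as the closure of a $G$-stable subscheme) in which $G/H$ sits as an open equivariant subscheme with schematically dense image. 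This furnishes the compactification when $G$ is affine.

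For arbitrary $k$-groups $G$, I would invoke Chevalley's structure theorem to obtain an exact sequence $1 \to G_{\mathrm{aff}} \to G \to A \to 1$, with $G_{\mathrm{aff}}$ affine and $A$ an abelian variety, and then realize $G/H$ $G$-equivariantly as a fiber space over the quotient $A/\pi(H)$ (itself projective, being a quotient of an abelian variety by a closed subgroup) with typical fiber of the shape $G_{\mathrm{aff}}/(G_{\mathrm{aff}} \cap H)$. The affine case compactifies the fiber inside some $\mathbb{P}(V)$, and a $G$-equivariant projective-bundle construction over $A/\pi(H)$ assembles these into a projective $G$-compactification of $G/H$. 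The main obstacle, in my view, is not the affine step (which is essentially classical Chevalley) but this last gluing: ensuring that the fiberwise projective compactifications descend to a genuinely $G$-equivariant projective scheme requires some $A$-equivariant descent machinery and a careful identification of the relevant equivariant line bundle on the base.
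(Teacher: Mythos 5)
The paper does not actually prove this statement; it only cites Section~5.2 of Brion's Clifford lecture notes, so your proposal has to be judged on its own terms against the full generality of the claim. Your affine case is the classical Chevalley argument and is essentially sound: using Remark~\ref{R:Sumihiro} to find a finite-dimensional $G$-submodule $W\subset\Gamma(G,\mathcal{O}_G)$ containing generators of $I_H$, passing to $\bigwedge^d(W\cap I_H)\subset\bigwedge^d W$, and taking the closure of the orbit of the corresponding point of $\mathbb{P}(V)$ is exactly the standard route. You do need to verify the stabilizer identity functorially (on $R$-points for all $k$-algebras $R$) and to justify local closedness of orbits for possibly non-smooth, non-reduced $G$ and $H$ --- the theorem is stated for arbitrary $k$-groups, and the remarks following it in the paper emphasize that non-reducedness is part of the point --- but these are routine.

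The genuine gap is the non-affine case, and it is not a small one. First, Chevalley's structure theorem in the form you invoke ($1\to G_{\mathrm{aff}}\to G\to A\to 1$ with $A$ an abelian variety) requires $G$ smooth and connected and, classically, $k$ perfect; the statement you are proving allows arbitrary group schemes of finite type over an arbitrary field, which is precisely the generality Brion's notes are designed to reach (see remarks (1) and (4) after the theorem). Second, even granting the exact sequence, the assembly over $A/\pi(H)$ is not yet an argument: the fiber $G_{\mathrm{aff}}/(G_{\mathrm{aff}}\cap H)$ must be compactified equivariantly for the full stabilizer $N:=G_{\mathrm{aff}}\cdot H$ of a point of the base, not merely for $G_{\mathrm{aff}}$, and $N$ is in general \emph{not} affine (its image in $A$ is the proper subgroup scheme $\pi(H)$), so the affine case does not supply such a compactification; one must also show that the contracted product $G\times^{N}Y$ is a projective \emph{scheme} rather than just a proper algebraic space, which requires constructing an ample $G$-linearized line bundle on it via the theorem of the square on the abelian part. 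These two points are exactly the content of the cited Section~5.2, and your proposal flags them as ``the main obstacle'' rather than resolving them, so the proof is incomplete where the theorem is actually hardest.
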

\begin{proof}
See~\cite[Section 5.2]{Brion:CliffordLectures}.
\end{proof}

Some remarks are in order:
\begin{enumerate}
\item The homogenous space $G/H$ of 
Theorem~\ref{T:projectiveembedding} 
is quasiprojective; this is 
a well-known theorem for algebraic groups
in the classical sense. But notice here that 
Brion proves the result for not necessarily
reduced algebraic groups;
of course, if the ground
field is of characteristic zero, then Cartier's
theorem implies that $G$ is reduced. 
\item If in addition $G$ is smooth, then
$G/H$ has an equivariant compactification
by a normal projective scheme.
\item If the characteristic of $k$ is 0, then 
every homogenous space has 
a smooth projective equivariant 
compactification.
\item Over any imperfect field,
there exists smooth connected 
algebraic groups having no smooth 
compactification (see~\cite[Remark 5.2.3]{Brion:CliffordLectures}
for an example).
\end{enumerate}

\section{Algebraic monoids}\label{S:AlgebraicMonoids}

For an algebro-geometric introduction 
to the theory of (not necessarily affine) 
monoid and semigroup schemes
we recommend Brion's lecture 
notes~\cite{Brion:FieldsLectures}
(see also~\cite[Chapter II]{DemazureGabriel}
\footnote{This book is one of the few 
if not the only book in 
algebraic geometry that 
acknowledges monoid schemes 
as part of the theory of group schemes.}).

Let us define a monoid scheme
by relaxing the condition of 
invertibility in the definition of group 
schemes. More precisely, 
let $R$ be a commutative ring.
A monoid scheme over $R$ 
is an $R$-scheme
whose functor of points 
factors through the
forgetful functor from the category 
of monoids to the category of sets.

\begin{definition}\label{D:k-monoids}
Let $k$ be an algebraically closed 
field. An {\em algebraic monoid over $k$},
also called a {\em $k$-monoid}, is 
a monoid scheme over $k$ whose
underlying scheme is 
separated and of finite type. 
\end{definition}

\begin{remark}
Our definition of $k$-monoids is 
somewhat more general than the one
that is used by Brion in~\cite{Brion:FieldsLectures}
since we do not assume reducedness 
of the underlying scheme structure. 
\end{remark}

It is not difficult to see that 
the category of $k$-groups
forms a full subcategory
of the category of $k$-monoids. 
We are mainly interested in 
the affine $k$-monoids but let us first 
mention a general result.

Let $M$ be a $k$-monoid, and let 
$G=G(M)$ denote its unit-group. 
Then $M$ is called {\em unit-dense} if $G$ is 
dense in $M$. 
A (weaker) form of the following theorem 
was first proven by Rittatore. 
Here, we will make use of Brion's proof 
from~\cite[Section 3]{Brion:FieldsLectures}.
\begin{theorem}\label{T:RittatoreBrionCan}
Let $M$ be a unit-dense irreducible 
$k$-monoid, and let $G$ denote the unit-group of $M$. 
If $G$ is affine, then so is $M$. 
\end{theorem}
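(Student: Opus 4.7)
The plan is to use the Brion--Rittatore structure theorem mentioned in the introduction, which expresses any irreducible normal algebraic monoid $N$ as a homogeneous $G(N)\times G(N)$-equivariant fiber bundle $\alpha\colon N\to A$ over an abelian variety $A$, with the fiber over the identity being a normal irreducible linear algebraic monoid. Once we are in this setting, the hypothesis that the unit group of $M$ is affine will force $A$ to be trivial, collapsing $M$ onto one of its (affine) fibers.

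First I would reduce the statement to the case where $M$ is normal. Let $\widetilde{M}\to M_{\mathrm{red}}\to M$ denote the composite of the reduction and the normalization. Since $k$ is algebraically closed and $M$ is irreducible, this is a finite surjection. Because $G$ is dense in $M$, the multiplication morphism $M\times M\to M$ is dominant, so by the universal property of normalization (applied to dominant morphisms of irreducible varieties) the monoid operations lift to $\widetilde{M}$. The unit group $\widetilde{G}$ of $\widetilde{M}$ is then finite over $G$, hence affine, and it is dense in $\widetilde{M}$. By Chevalley's theorem on finite surjections out of affine schemes, if $\widetilde{M}$ is affine then so is $M$, so it suffices to prove the result under the additional assumption that $M$ is normal.

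Now assume $M$ is normal, and let $\alpha\colon M\to A$ be the Brion--Rittatore fibration. The restriction $\alpha|_G\colon G\to A$ is a morphism from the affine variety $G$ to the abelian variety $A$, and it is a classical fact that every such morphism is constant; set $\alpha(G)=\{a_0\}$. Since $G$ is dense in $M$ and $A$ is separated, the image $\alpha(M)$ must lie in the Zariski closure of $\{a_0\}$, which is the single point $\{a_0\}$. Hence $\alpha$ is the constant map to $a_0$, so $M=\alpha^{-1}(a_0)$ is a fiber of the bundle, and the Brion--Rittatore theorem identifies this fiber as an affine linear algebraic monoid. This gives the desired conclusion.

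The main obstacle I anticipate is the normalization step: one has to justify carefully that the monoid operations transfer to $\widetilde{M}$ and that the unit group is preserved under this passage, all the while allowing the possibility that $M$ is non-reduced (which is permitted by Definition~\ref{D:k-monoids} but sometimes excluded in the classical literature). By contrast, the structure-theorem input is a quotable black box, and the Albanese-collapse argument is entirely formal, relying only on the basic dichotomy between affine and abelian geometry.
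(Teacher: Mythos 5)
Your argument is correct in outline, but it routes the essential step through heavier machinery than the paper does. The reduction to the reduced/normal case is essentially the paper's own move: the paper likewise passes to $M_{red}$ (via the normalization) and descends affineness along the finite surjection onto $M$ using Chevalley's theorem that the target of a finite surjection from an affine scheme is affine. Where you genuinely diverge is in the core case. The paper simply quotes Theorem~2 of \cite{Brion:FieldsLectures}, which \emph{is} the statement for reduced unit-dense monoids with affine unit group, whereas you re-derive that case from the Brion--Rittatore structure theorem (the Albanese fibration over an abelian variety with linear monoid fiber) and then collapse the base. Your route buys a conceptual picture --- affineness of $M$ is exactly the vanishing of its Albanese --- at the cost of invoking a strictly stronger result; note that in \cite{Brion:FieldsLectures} the structure theorem is proved \emph{after}, and using, the affineness theorem, so as a self-contained logical development (rather than a citation of the published literature) your argument runs backwards through the dependency chain.

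Two smaller points. The assertion that ``every morphism from an affine variety to an abelian variety is constant'' is false as stated: an elliptic curve minus a point is affine and embeds nonconstantly. What you need, and what is true, is that every morphism from a connected algebraic group with affine underlying variety to an abelian variety is constant (rigidity lemma); more directly, the base of the Brion--Rittatore fibration is $G/G_{\mathrm{aff}}$, which is already a point when $G$ is affine, so no separate ``classical fact'' is required. Finally, your lifting of the monoid law to $\widetilde{M}$ is the standard dominance argument and works, but the paper sidesteps it: only $M_{red}$ is needed, since Brion's Theorem~2 applies to it directly, and $M_{red}\rightarrow M$ is already a finite surjection.
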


\begin{proof}
If $M$ is reduced, then the result 
follows from~\cite[Theorem 2]{Brion:FieldsLectures}.
If $M$ is not reduced, then we pass to a normalization
$\widetilde{M}$ which factors through the 
reduction $M_{red}$. 
This follows from the fact that $M$ is 
irreducible (hence Noetherian). Therefore, we can apply 
~\cite[\href{http://stacks.math.columbia.edu/tag/0350}{Lemma 28.52.2}]{StacksProject}.
But a normalization is a finite morphism,
therefore $M_{red}\rightarrow M$ is a finite morphism.
Since $M_{red}$ is affine (once again by~\cite[Theorem 2]{Brion:FieldsLectures}), and since 
the image of an affine scheme under a finite morphism
is affine, $M$ is an affine scheme as well.
\end{proof}

\section{From spherical varieties
to spherical spaces}\label{S:FromSpherical}

We start with reviewing
the classification schematics of 
the spherical embeddings over
an algebraically closed field.
Throughout this section $k$
will denote an algebraically
closed field and we assume 
that all varieties are defined 
over $k$. 
As usual, let $G$ be
a reductive group, let $B$ be a Borel subgroup,
and let $T$ be a maximal torus
contained in $B$.
If $K$ is an algebraic 
group, we will denote by
$X^*(K)$ the group of 
characters of $K$. Note that 
$X^*(B)=X^*(T)$. 
This is because the commutator
subgroup of $B$ coincides with  
the unipotent radical $\ms{R}_u(B)$, 
and $B=\ms{R}_u(B) \rtimes T$.

Let $Y$ denote $G/H$,
where $H$ is a spherical
subgroup of $G$.
Quotients of affine 
groups, in particular $Y$, 
have the structure of 
a quasiprojective variety. 
Recall that $Y$ is a spherical 
$G$-variety if and only if
$B$ has only finitely many
orbits with respect to the 
left multiplication action on $Y$. 
By a theorem of Brion,
this is equivalent to the 
statement that $B$ has 
an open orbit in $Y$. 
Thus, it should come 
as no surprise 
that the $B$-invariant 
rational functions  
on $Y$ are among 
the main players in this game.

The space of $B$-semiinvariant
rational functions on $Y$ is denoted by 
$k(Y)^{(B)}$. In other words, 
\begin{align}\label{A:semiinvariants}
k(Y)^{(B)}=\{ f\in k(Y):\ 
b\cdot f = \chi(b) f
\ \text{ for all $b\in B$ and 
for some character $\chi$ of $B$} \}.
\end{align}
The gist of the classification
schematics for spherical 
varieties will take place 
inside the vector space 
$\textbf{Hom}_\Z ( X^*(B),\Q)$. 
Indeed, it is easy to check  
that $k(Y)^{(B)}$ is a subgroup
of $k(Y)$ and that the assignment
\begin{align}\label{A:assignment}
k(Y)^{(B)} &\longrightarrow X^*(B)\\
f &\rightsquigarrow \chi_f, \notag
\end{align} 
where $\chi_f$ is a 
character as in (\ref{A:semiinvariants}) 
is an injective  
group homomorphism.
We will denote the image 
of (\ref{A:assignment}) by 
$\varOmega_Y$, and 
we will denote the $\Q$-vector 
space associated
with the dual of $\varOmega_Y$
by $\mc{Q}_Y$. 
In other words, 
\begin{itemize}
\item $\varOmega_Y :=\{ \chi \in X^*(B) :\ b\cdot f = \chi(b) f
\ \text{ for all $b\in B$ and for some $f\in k(Y)^{(B)}$} \}$; 
\item $\mc{Q}_Y :=  \textbf{Hom}_\Z ( \varOmega_Y, \Q)$.
\end{itemize}
We occasionally refer to $\varOmega_Y$ 
as the character group of the homogenous
variety $Y$ since in the special case, where
$Y=G\times G /\text{diag} (G) \cong G$
viewed as a spherical $G\times G$-variety, 
the character group of $Y$ 
is isomorphic to the 
`ordinary' character group,
$\varOmega_{Y} 
\cong X^*(B)$.

We look closely  
at the divisors 
and their invariants on 
$Y$. 
A function $\nu: k(Y)\rightarrow \Q$ 
is called a $\Q$-valued 
discrete valuation on $k(Y)$ if 
for every $a,b\in k(Y)^*$, 
we have:
\begin{enumerate}
\item $\nu(k(Y)^*) \cong \Z$ \  and  \ $\nu(k)=\{0\}$; 
\item $\nu(ab) = \nu(a)+\nu(b)$;
\item $\nu(a+b) \geq \min \{ \nu(a),\nu(b)\}$ provided $a+b\neq 0$.
\end{enumerate}
We notice here that 
every $\Q$-valued 
discrete valuation
$\nu$ on $k(Y)$ defines 
a function on $\varOmega_Y$.
More precisely,
there is a map
\begin{align*}
\rho:\ \{\text{$\Q$-valued 
discrete valuations on $k(Y)$}\} 
&\longrightarrow \mc{Q}_Y \\
\nu 
&\longmapsto \rho_\nu 
\end{align*}
such that $\rho_\nu$ is the 
map that sends 
an element $\chi=\chi_f$ 
of $\varOmega_Y$
to $\nu(f)$, where $f$ is 
the $B$-semiinvariant 
that specifies $\chi$ 
(so we wrote $\chi=\chi_f$).
Indeed, since $\chi$ is defined 
uniquely by $f$ (up to 
a scalar), 
$\rho_\nu$ is well-defined.
Moreover, it is 
easy to check that 
$\rho_\nu( \chi_f \chi_g) = \rho_\nu(\chi_f) + \rho_\nu (\chi_g)$,
hence $\rho_\nu\in \mc{Q}_Y$.

\begin{itemize}
\item 
A valuation 
$\nu$ is called $G$-invariant 
if $\nu(g\cdot a) = \nu(a)$ for every 
$g\in G$ and $a\in k(Y)$. 
We will denote  the set of all 
$\Q$-valued $G$-invariant 
discrete valuations on $Y$
by $\mc{V}_Y$.
\end{itemize}
It is not completely 
obvious, but nevertheless
true, that the restriction 
$$
\rho \vert_{\mc{V}_Y} :
\mc{V}_Y \longrightarrow \mc{Q}_Y
$$
is an injective homomorphism 
of abelian groups.

\vspace{.5cm}

The reason for which we 
started looking at 
discrete valuations 
defined on $Y$ in 
the first place is 
because many important 
discrete valuations come 
from irreducible 
hypersurfaces. 
From now on, 
we will refer to
irreducible hypersurfaces
as prime divisors.
For any spherical action 
of $G$ on $Y$, we will 
consider the set of all 
$B$-stable prime
divisors in $Y$.
More generally, if $Y'$ is 
a normal spherical
$G$-variety, then 
{\em a color of $Y'$} 
is defined as a $B$-stable,
but not $G$-stable, prime divisor. 
In our case, 
since $G$ acts transitively 
on $Y=G/H$, any $B$-stable 
prime divisor in $Y$ 
is non-$G$-stable.
\begin{itemize}
\item The set of 
all $B$-stable, but 
not $G$-stable, 
prime divisors in $Y$ 
is denoted by 
$\mc{D}_{Y}$.
The elements of 
$\mc{D}_{Y}$
are called the colors 
of $Y$. 
\end{itemize}
Let us point out that 
on a noetherian 
integral separated scheme, 
which is regular in 
codimension one, 
the local ring associated 
with each 
prime divisor 
is a discrete 
valuation ring (DVR).
In particular, 
since our $Y$ 
is a smooth variety, 
the local ring of a color 
of $Y$ at its generic point 
is a DVR, and we have
the map 
\begin{align}\label{A:rho}
\widetilde{\rho} 
: \mc{D}_{Y} \rightarrow \mc{Q}_Y,
\end{align}
which is defined 
as the composition of 
$\rho$ with the map
that assigns a color  
to the corresponding
discrete valuation.

\begin{definition}
The colored cone of $Y=G/H$
is the pair $(\mc{C}_Y,\mc{D}_{Y})$
where $\mc{C}_Y$ is the cone in 
$\mc{Q}_Y$ that is generated
by $\rho(\mc{V}_Y)$ and 
$\widetilde{\rho}(\mc{D}_{Y})$.
\end{definition}

So far what we
have are 
some `birational invariants'
that are defined solely 
for $Y=G/H$, and 
we have not given 
any indication of
how they are related
to its embeddings. 
To see how
all these basic ingredients 
come together to play 
a role, next, we 
introduce the notion 
of a colored fan.
This will give us a generalization  
of the combinatorial 
classification of toric varieties. 
\vspace{.5cm}

Let $\overline{Y}$ 
be a $G$-equivariant 
embedding of $Y$. 
Let 
$\mc{D}_{\overline{Y}} $
denote the set of 
$B$-stable, but not
$G$-stable, 
prime divisors of $\overline{Y}$. 
Clearly, 
$\mc{D}_{Y}$ 
is a subset of 
$\mc{D}_{\overline{Y}}$.
Since $Y$ is the 
open $G$-orbit in 
$\overline{Y}$, we have 
$k(Y)= k(\overline{Y})$. 
In particular, 
there is an extension 
of (\ref{A:rho}) to a
map 
$\overline{\rho}
: \mc{D}_{\overline{Y}}
\rightarrow \mc{Q}_Y$.
Let 
$$
\pi : \mc{D}_{\overline{Y}} \rightarrow 
\mc{D}_{Y}
$$
denote the partially defined map 
$\pi(S)=S\cap Y$,
whenever $S\cap Y$ is an element 
of $\mc{D}_{Y}$.
Let us mention in passing 
that both of the sets 
$\mc{D}_{Y}$
and $\mc{D}_{\overline{Y}}$
contain a finite number 
of elements since
$\overline{Y}$ is spherical. 
Note also that 
the set of $G$-invariant 
discrete valuations 
on $\overline{Y}$ is equal
to $\mc{V}_Y$.

\begin{table}[htp]
\begin{tabular}{lll}
$Y'$ &: & a $G$-orbit in $\overline{Y}$\\
$\nu_S$ &: & the discrete 
valuation in $k(Y)$ associated with 
a prime divisor $S$ of $\overline{Y}$\\
$\mc{V}_{\small{Y'\hookrightarrow \overline{Y}}}$ &: & the set 
of $G$-invariant valuations in $k(Y)$ of the form $\nu_S$ 
with $Y'\subset S \subset \overline{Y}$\\
$\mc{D}_{\small{Y'\hookrightarrow \overline{Y}}}$ &: & the set 
of colors $D\in \mc{D}_{Y}$ such that 
$\exists S \in \mc{D}_{\overline{Y}}$ with  
$Y'\subset S$ and 
$D=\pi(S)$ \\
$\mc{C}_{\small{Y'\hookrightarrow \overline{Y}}}$ &: & the 
cone in $\mc{Q}_Y$ that is generated by the images 
$\rho(\mc{V}_{\small{Y'\hookrightarrow \overline{Y}}})$
and $\widetilde{\rho}(\mc{D}_{\small{Y'\hookrightarrow \overline{Y}}})$
\end{tabular}
\end{table}

\begin{itemize}
\item The pair $(\mc{C}_{\small{Y'\hookrightarrow \overline{Y}}},
\mc{D}_{\small{Y'\hookrightarrow \overline{Y}}})$ is called 
the {\em colored cone} of the $G$-orbit $Y'$.
A {\em face} of $(\mc{C}_{\small{Y'\hookrightarrow \overline{Y}}},
\mc{D}_{\small{Y'\hookrightarrow \overline{Y}}})$
is a pair of the form $(\mc{C},\mc{D})$, 
where $\mc{C}$ is a face of 
$\mc{C}_{\small{Y'\hookrightarrow \overline{Y}}}$
such that 
\begin{enumerate}
\item[(i)] $\mc{C} \cap 
(\mc{C}_{\small{Y'\hookrightarrow \overline{Y}}})^0 \neq \emptyset$;
\item[(ii)] $\widetilde{\rho}^{-1}(\mc{C})\cap
\mc{D}_{\small{Y'\hookrightarrow \overline{Y}}} \neq \emptyset$.
\end{enumerate}
\end{itemize}
Any colored cone satisfies the following defining properties:
\begin{enumerate}
\item[\textbf{C1.}] The cone
$\mc{C}_{\small{Y'\hookrightarrow \overline{Y}}}$ 
is generated by 
$\widetilde{\rho}(\mc{D}_{\small{Y'\hookrightarrow \overline{Y}}})$
and a finite number of 
vectors of the form 
$\rho(\nu_S)$, 
where
$S\in \mc{V}_{\small{Y'\hookrightarrow \overline{Y}}}$.
\item[\textbf{C2.}] The relative interior of 
$\mc{C}_{\small{Y'\hookrightarrow \overline{Y}}}$ 
has a nonempty intersection with the set 
$\rho(\mc{V}_{\small{Y'\hookrightarrow \overline{Y}}})$.
\item[\textbf{C3.}] $\mc{C}_{\small{Y'\hookrightarrow \overline{Y}}}$
is strictly-convex, that is to say,
$\mc{C}_{\small{Y'\hookrightarrow \overline{Y}}} \cap
(-\mc{C}_{\small{Y'\hookrightarrow \overline{Y}}})=\{0\}$.
\item[\textbf{C4.}] $0$ is not an element of 
$\widetilde{\rho}(\mc{D}_{\small{Y'\hookrightarrow \overline{Y}}})$.
\end{enumerate}

\vspace{.5cm}

Now we are ready 
to introduce the 
combinatorial
objects which 
parametrize the
$G$-equivariant embeddings
of $Y$.
\begin{definition}
The following (finite) set is called the 
{\em colored fan} of $\overline{Y}$:
\begin{align*}
\mc{F}_{\overline{Y}} :=
\{ 
(\mc{C}_{\small{Y'\hookrightarrow \overline{Y}}},
\mc{D}_{\small{Y'\hookrightarrow \overline{Y}}}) :\
Y' \text{ is a $G$-orbit in $\overline{Y}$} \}.
\end{align*}
\end{definition}

\vspace{.5cm}
The colored fans satisfy the following 
defining properties:
\begin{enumerate}
\item[\textbf{F1.}] Every face of 
a colored cone in $\mc{F}_{\overline{Y}}$ 
is an element of $\mc{F}_{\overline{Y}}$. 
\item[\textbf{F2.}] For every $G$-invariant
valuation $\nu$ in $\mc{V}_Y$, 
there exists at most one colored 
cone $(\mc{C},\mc{D})$ in
$\mc{F}_{\overline{Y}}$ such that 
$v\in \mc{C}^0$.
\end{enumerate}

It is easy to 
make abstract
versions of colored fans.
Let $V$ be a 
finite dimensional
vector space over $\Q$. 
Starting with a 
subset $\mc{V}$ of $V$
and a finite set $\mc{D}$ 
together with a set map
$\widetilde{\rho}:\mc{D}
\rightarrow V$, 
we define a {\em colored fan
associated with 
$(V,\mc{V},\mc{D},\widetilde{\rho})$}
as a finite collection of 
pairs $(\mc{C},\mc{E})$,
where 
$\mc{C}$ is a 
cone in $V$
and $\mc{E}$ is a 
subset of $\mc{D}$ 
satisfying the properties
\textbf{F1},\textbf{F2},
\textbf{C1}$-$\textbf{C4}.
Of course, $\mc{V}$ 
plays the role of $\rho(\mc{V}_Y)$
in $\mc{Q}_Y$ 
and $\widetilde{\rho}: \mc{D}\rightarrow
V$ plays
the role of 
$\widetilde{\rho}: \mc{D}_{Y}\rightarrow 
\mc{Q}_Y$.

Let $H$ and $H'$
be two closed subgroups
in $G$ such that the 
homogenous varieties
$$
Y:=G/H \ \text{ and } Z:=G/H'
$$
are spherical. 
Let 
$$\varphi: Y\rightarrow Z$$ 
be a morphism of 
varieties. If $\varphi$
is $G$-equivariant, then the 
resulting map on 
the character groups 
$\varphi^*: X^*(Z)\rightarrow X^*(Y)$
is injective, hence, the `dual' 
linear map
$\varphi_*: \mc{Q}_{Y}\rightarrow \mc{Q}_Z$
is surjective. 
Furthermore, we have 
$\varphi_*(\mc{V}_Y)=\mc{V}_Z$.
Let $\mc{D}_\varphi^c$ denote 
the set of colors of $Y$ that 
are mapped into $Z$ dominantly,
$$
\mc{D}_\varphi^c := \{ D\in \mc{D}_Y:\
\text{ $\varphi(D)$ is dense in $Z$} \}.
$$
In other words, $\mc{D}_\varphi^c$ 
is the set of colors 
of $Y$ which are too big,
so, we may ignore(!) them
in the combinatorial setup. 
We set 
$$
\mc{D}_\varphi := \mc{D}_{Y} - \mc{D}^c_\varphi.
$$

\begin{definition}
Let $\varphi : Y\rightarrow Z$ 
be a $G$-equivariant
morphism between two spherical 
homogenous $G$-varieties. 
Let $\overline{Y}$ 
and $\overline{Z}$ denote
two equivariant 
embeddings of $Y$ and $Z$,
respectively. Let $Y'$ and $Z'$
be two $G$-orbits in $\overline{Y}$
and $\overline{Z}$, respectively. 
The map 
$\varphi$ is said to be 
a {\em morphism} between 
the colored cones 
$(\mc{C}_{\small{Y'\hookrightarrow \overline{Y}}},
\mc{D}_{\small{Y'\hookrightarrow \overline{Y}}})$
and
$(\mc{C}_{\small{Z'\hookrightarrow \overline{Z}}},
\mc{D}_{\small{Z'\hookrightarrow \overline{Z}}})$ 
if we have 
\begin{enumerate}
\item $\varphi_*(\mc{C}_{\small{Y'\hookrightarrow \overline{Y}}})
\subseteq \mc{C}_{\small{Z'\hookrightarrow \overline{Z}}}$, and 
\item $\varphi(\mc{D}_\varphi \cap\mc{D}_{\small{Y'\hookrightarrow \overline{Y}}}) \subseteq
\mc{D}_{\small{Z'\hookrightarrow \overline{Z}}}$.
\end{enumerate}
The map $\varphi$ is said to 
be a {\em morphism} between
the colored fans $\mc{F}_{\overline{Y}}$ and $\mc{F}_{\overline{Z}}$ if for every cone $(\mc{C},\mc{D})$
in $\mc{F}_{\overline{Y}}$ there exists a cone 
$(\mc{C}',\mc{D}')$ in $\mc{F}_{\overline{Z}}$
such that 
$\varphi : (\mc{C},\mc{D})\rightarrow 
(\mc{C}',\mc{D}')$ is a morphism of cones. 
\end{definition}

The following result,
which is proven by
Knop in~\cite{Knop89}, 
is a  generalization
of the classification 
result of 
Luna and Vust for
simple embeddings.

\begin{theorem}[Knop]
Let $Y$ be a spherical
homogenous $G$-variety,
and let $B$ be a Borel 
subgroup in $G$. 
The assignment 
$\overline{Y} \rightsquigarrow 
\mc{F}_{\overline{Y}}$ 
is a bijective 
correspondence between 
the isomorphism classes 
of $G$-equivariant
embeddings of $Y$ and 
the isomorphism
classes of 
colored fans associated with 
$(\mc{Q}_Y,\mc{V}_Y,\mc{D}_{Y},\widetilde{\rho})$.
In fact, this assignment
is an equivalence 
between the category
of equivariant embeddings of 
$Y$ and the category 
of colored fans
associated with 
$(\mc{Q}_Y,\mc{V}_Y,\mc{D}_Y,\widetilde{\rho})$.
\end{theorem}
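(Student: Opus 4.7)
My plan follows the Luna--Vust strategy adapted by Knop to arbitrary characteristic: first classify the simple embeddings (those with a unique closed $G$-orbit) by colored cones, then treat a general embedding as a gluing of simple pieces indexed by the cones of its fan. Accordingly, I would split the argument into a construction of the functor $\overline{Y}\leadsto\mc{F}_{\overline{Y}}$, an inverse construction, and a compatibility check on morphisms.

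For the forward direction I would verify that $(\mc{C}_{Y'\hookrightarrow\overline{Y}},\mc{D}_{Y'\hookrightarrow\overline{Y}})$ satisfies \textbf{C1}--\textbf{C4} for each $G$-orbit $Y'\subseteq\overline{Y}$, and that the resulting collection satisfies \textbf{F1}--\textbf{F2}. Axiom \textbf{F1} reflects the $G$-orbit closure order, while \textbf{F2} is a consequence of separatedness: two simple pieces sharing a $G$-invariant valuation in their open interior would force a non-separated glue along a common ridge. For the inverse direction, given a colored cone $(\mc{C},\mc{E})$, by Sumihiro's theorem (Remark~\ref{R:Sumihiro}) any simple embedding contains a $B$-stable affine open subscheme $X_0$ meeting the closed orbit, and the full simple piece equals $G\cdot X_0$. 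The key observation is that the coordinate ring $k[X_0]$ decomposes into $B$-semiinvariant weight spaces, and a character $\chi_f\in\varOmega_Y$ occurs in $k[X_0]$ precisely when it pairs non-negatively with every element of $\rho(\mc{V}_{Y'\hookrightarrow\overline{Y}})\cup\widetilde{\rho}(\mc{D}_{Y'\hookrightarrow\overline{Y}})$ --- equivalently, when $\chi_f$ lies in the dual of $\mc{C}$. Finite generation (\textbf{C1}) and normality of $\overline{Y}$ then let one recover $X_0$ as the spectrum of the resulting algebra. Given an abstract colored fan, assembling the simple embeddings built from each cone this way yields a scheme; axiom \textbf{F2} makes the glue separated, and \textbf{F1} makes face inclusions correspond to open immersions. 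The functoriality on morphisms then reduces to tracking how a $G$-equivariant map $\overline{Y}\to\overline{Z}$ induces a morphism of the associated character groups and hence a morphism of colored cones, and conversely how a morphism of fans determines compatible inclusions of $B$-charts.

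The hard part is showing that $\widetilde{\rho}$ together with $\rho\vert_{\mc{V}_Y}$ carries enough information to determine the $B$-chart $X_0$ up to isomorphism. Two deep inputs are required: the injectivity of $\rho$ on $\mc{V}_Y$, and the fact that $\rho(\mc{V}_Y)$ is a convex polyhedral cone --- both originally due to Brion and Knop via the so-called little Weyl group, and not accessible by elementary means. In positive characteristic a further subtlety appears when comparing $G$-invariant and $B$-invariant valuations, because residue field extensions need not be separable; managing this is precisely where Knop's extension of the characteristic-zero Luna--Vust theory required new technical ideas. Once these ingredients are granted, the verification that the two constructions are mutually inverse, and that they upgrade to an equivalence of categories, becomes a bookkeeping exercise comparing monoids of $B$-semiinvariants with duals of colored cones.
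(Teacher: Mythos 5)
You should first be aware that the paper does not prove this theorem at all: it is stated as an imported result and attributed to Knop with a bare citation to \cite{Knop89}, so there is no internal proof to measure your attempt against. Judged on its own, your outline faithfully reproduces the standard Luna--Vust strategy as extended by Knop: reduction to simple embeddings, recovery of a simple embedding from a $B$-chart $X_0$ with $G\cdot X_0$ equal to the whole piece, the dictionary between the colored cone and the $B$-semiinvariants regular on $X_0$, separatedness accounting for \textbf{F2}, and gluing along faces for \textbf{F1}. That is the correct roadmap, and you are also right to flag the genuinely hard inputs (injectivity of $\rho$ on $\mc{V}_Y$, polyhedrality and convexity of the valuation cone via the little Weyl group, and the inseparability issues in positive characteristic).

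However, as a proof the attempt has acknowledged but real gaps, and one step is stated too loosely. You claim that $k[X_0]$ is recovered because a character $\chi_f\in\varOmega_Y$ ``occurs in $k[X_0]$ precisely when it pairs non-negatively'' with the generators of the cone; but $k[X_0]$ is not generated by its $B$-semiinvariants, so knowing the weight monoid $\mc{C}^\vee\cap\varOmega_Y$ does not by itself determine the algebra. The actual argument pins down $X_0$ inside $k(Y)$ as the set of rational functions whose valuations along the relevant $G$-invariant divisors and colors are non-negative, and then uses normality plus the multiplicity-freeness of the $B$-module structure to show this determines the $G$-algebra $k[G\cdot X_0]$; that identification, together with the two deep inputs you defer to Brion and Knop, is where the content of the theorem lives. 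So your proposal is a correct high-level account of the proof in \cite{LunaVust} and \cite{Knop89}, but it is a roadmap with the essential lemmas black-boxed rather than a self-contained argument --- which, to be fair, is also all the paper itself provides.
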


\begin{remark}
As we mentioned 
before the theorem
of Knop, the role 
of colored fans for 
simple embeddings
was already known.
In fact, Luna and Vust
had shown in~\cite{LunaVust} 
that the colored cone 
$(\mc{C}_{\small{Z\hookrightarrow \overline{Y}}},
\mc{D}_{\small{Z\hookrightarrow \overline{Y}}})$, 
where $Z\hookrightarrow \overline{Y}$
is the closed orbit of
$\overline{Y}$, 
uniquely determines 
$\overline{Y}$. 
\end{remark}

\begin{remark}
It is not difficult to check that 
all definitions pertaining to 
the colored cones make sense 
(definable) if we use a separably
closed field instead of an algebraically
closed field. 
\end{remark}

\subsection{Quasiprojective
colored fans.}\label{SS:Quasiprojective}

It is useful to know when 
an equivariant embedding
of a spherical homogenous
variety is affine, projective,
or more generally quasiprojective.
Such criteria are found 
by Brion in~\cite{Brion:Duke}. Here
we only give Brion's criterion for 
quasiprojectiveness.

\begin{theorem}[Brion]\label{T:quasiprojective}
Let $\mc{F}_{\overline{Y}}$ 
be the colored fan of 
an equivariant embedding 
$\overline{Y}$ of a spherical
homogenous $G$-variety $Y$. 
In this case, $\overline{Y}$ 
is quasiprojective if and only if 
for each colored cone 
$C_Z:=(\mc{C}_{\small{Z\hookrightarrow \overline{Y}}},
\mc{D}_{\small{Z\hookrightarrow \overline{Y}}})$
in $\mc{F}_{\overline{Y}}$ 
there exists a linear form,
denoted by $\ell_Z$, 
on $\mc{Q}_{\overline{Y}}$
such that the following two 
conditions are
satisfied: 
\begin{enumerate}
\item If $C_Z=
(\mc{C}_{\small{Z\hookrightarrow \overline{Y}}},
\mc{D}_{\small{Z\hookrightarrow \overline{Y}}})$
and $C_{Z'}=
(\mc{C}_{\small{Z'\hookrightarrow \overline{Y}}},
\mc{D}_{\small{Z'\hookrightarrow \overline{Y}}})$ 
are two elements 
from $\mc{F}_{\overline{Y}}$, then 
the restrictions of the corresponding
linear forms onto 
$\mc{C}_{\small{Z\hookrightarrow \overline{Y}}}
\cap \mc{C}_{\small{Z'\hookrightarrow \overline{Y}}}$
are the same.
\item If $C_Z=
(\mc{C}_{\small{Z\hookrightarrow \overline{Y}}},
\mc{D}_{\small{Z\hookrightarrow \overline{Y}}})$
and $C_{Z'}=
(\mc{C}_{\small{Z'\hookrightarrow \overline{Y}}},
\mc{D}_{\small{Z'\hookrightarrow \overline{Y}}})$ 
are two distinct elements 
from $\mc{F}_{\overline{Y}}$, and if a vector
$\chi \in \mc{Q}_{\overline{Y}}$ lies 
in the intersection of the interior of 
$\mc{C}_{\small{Z\hookrightarrow \overline{Y}}}$ 
with the image $\rho(\mc{V}_{\overline{Y}})$,
then $\ell_Z (\chi) > \ell_{Z'}(\chi)$. 
\end{enumerate}
\end{theorem}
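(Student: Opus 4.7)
The plan is to translate the quasiprojectivity of $\overline{Y}$ into the existence of a $B$-stable ample Cartier divisor, and then express the Cartier condition and the ampleness condition combinatorially on the colored fan $\mc{F}_{\overline{Y}}$, thereby producing (or recovering) the linear forms $\ell_Z$.

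First, I would reduce to $B$-stable divisors. Since $\overline{Y}$ is normal and $B$ has an open orbit, the complement of this open $B$-orbit is the union of the $B$-stable prime divisors of $\overline{Y}$, namely the colors in $\mc{D}_{\overline{Y}}$ together with the $G$-stable prime divisors (which correspond bijectively to the rays of $\mc{F}_{\overline{Y}}$ lying in $\rho(\mc{V}_{\overline{Y}})$). A standard averaging and translation argument, exploiting that $B$ is connected and solvable, shows that every Cartier divisor class on $\overline{Y}$ contains a $B$-stable representative. Hence $\overline{Y}$ is quasiprojective if and only if it carries an ample $B$-stable Cartier divisor.

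Second, I would encode the Cartier condition. A $B$-stable Weil divisor $\delta = \sum_{D} a_D D + \sum_{E} b_E E$, summed over colors $D$ and over $G$-stable prime divisors $E$, is locally principal in a $B$-stable neighborhood of a $G$-orbit $Z$ precisely when there exists a $B$-semiinvariant $f_Z \in k(\overline{Y})^{(B)}$ with $\mt{div}(f_Z) = \delta$ in that neighborhood. The character $\chi_{f_Z} \in \varOmega_{\overline{Y}}$ furnishes, by duality, a linear form $\ell_Z$ on $\mc{Q}_{\overline{Y}}$ satisfying $\ell_Z(\rho(\nu_E)) = b_E$ for each $E$ containing $Z$ and $\ell_Z(\widetilde{\rho}(D)) = a_D$ for each $D \in \mc{D}_{\small{Z \hookrightarrow \overline{Y}}}$. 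Compatibility of these local defining functions whenever two orbits $Z, Z'$ share a common boundary orbit forces the restrictions of $\ell_Z$ and $\ell_{Z'}$ to coincide on the intersection $\mc{C}_{\small{Z\hookrightarrow \overline{Y}}} \cap \mc{C}_{\small{Z'\hookrightarrow \overline{Y}}}$, which is exactly condition $(1)$.

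Third, I would translate ampleness into a strict convexity condition on the piecewise-linear function $\psi_\delta$ obtained by gluing the $\ell_Z$'s on the support of the colored fan. As in the classical toric case (Demazure's criterion), I expect $\delta$ to be ample if and only if $\psi_\delta$ is strictly convex across adjacent colored cones, i.e., for distinct $C_Z, C_{Z'} \in \mc{F}_{\overline{Y}}$ and any $\chi$ lying in the relative interior of $\mc{C}_{\small{Z\hookrightarrow \overline{Y}}} \cap \rho(\mc{V}_{\overline{Y}})$ one has $\ell_Z(\chi) > \ell_{Z'}(\chi)$, which is condition $(2)$. Conversely, given such a system of linear forms, one manufactures an ample $B$-stable Cartier divisor $\delta$ by prescribing its multiplicities along the $B$-stable prime divisors through the values of the $\ell_Z$'s.

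The main obstacle is this last step: upgrading the combinatorial strict convexity to genuine ampleness of the associated line bundle. I would attack it through the local structure theorem for spherical varieties, which identifies a $B$-stable affine open neighborhood of each closed $G$-orbit with a product of an affine space, accounting for the colors, and a toric slice transverse to the orbit. On the toric slice, the Demazure criterion recovers strict convexity of the restriction of $\psi_\delta$ along $\rho(\mc{V}_{\overline{Y}})$, while the product structure encodes how the colors contribute to global sections separating points and tangent vectors. Globalizing, using the finiteness of $\mc{D}_{\overline{Y}}$ and of $\mc{F}_{\overline{Y}}$, yields the desired equivalence.
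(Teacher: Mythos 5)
The paper does not actually prove this statement: it is quoted verbatim from Brion's work and the surrounding text explicitly defers to~\cite{Brion:Duke}, so there is no in-paper argument to compare against. Your sketch reconstructs the strategy of Brion's original proof correctly in outline: reduce to the existence of an ample $B$-stable Cartier divisor (using that the complement of the open $B$-orbit carries representatives of all divisor classes), describe Cartier divisors by a compatible family of $B$-eigenfunctions $f_Z$ indexed by the $G$-orbits, read off $\ell_Z$ from the character $\chi_{f_Z}$ so that compatibility on common faces gives condition (1), and identify ampleness with strict convexity of the glued piecewise linear function, which is condition (2). Two places where your outline is still short of a proof: the Cartier criterion must be stated in terms of the multiplicities of the divisor along all $B$-stable prime divisors whose closure contains the orbit $Z$ (equivalently, principality on the $G$-stable open subset of orbits specializing to $Z$), not merely local principality on some $B$-stable neighborhood; and the ampleness step cannot be obtained from the toric slice of the local structure theorem alone --- Brion's argument linearizes the line bundle, observes that its space of global sections is spanned by $B$-eigenvectors, and uses strict convexity to show that the eigensections attached to distinct maximal colored cones separate the corresponding closed orbits. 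As written, your final step is announced as an expectation rather than carried out, but the route you propose is the one Brion follows.
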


Following Huruguen,
we call a colored fan
whose cones satisfy the 
requirements of 
Theorem~\ref{T:quasiprojective}
a {\em quasiprojective colored fan}. 
We know from (the remarks following) 
Theorem~\ref{T:projectiveembedding}
that there are plenty of quasiprojective colored 
fans, especially over perfect fields.

With this precise definition of 
quasiprojectiveness at hand, now 
we are able to state Huruguen's 
result.

\begin{theorem}[Huruguen]\label{T:Huruguenmain}
Let $k$ be a perfect field, let $G$ be a 
connected reductive group that is 
defined over $k$, and 
let $\overline{Y}(\overline{k})$ be an embedding 
of a spherical homogenous 
spherical $G$-variety $Y$ defined over $\overline{k}$. 
We assume that the fan of $\overline{Y}$ 
is $\varGamma$-stable. In this case, 
$\overline{Y}$ admits a $k$-form if and only if 
for every cone 
$C_Z:=(\mc{C}_{\small{Z\hookrightarrow \overline{Y}}},
\mc{D}_{\small{Z\hookrightarrow \overline{Y}}})$
in $\mc{F}_{\overline{Y}}$,
the colored fan consisting of the cones 
$
(\sigma(\mc{C}_{\small{Z\hookrightarrow \overline{Y}}}),
\sigma(\mc{D}_{\small{Z\hookrightarrow \overline{Y}}}))
$, $\sigma \in \varGamma$
as well as all of its faces are quasiprojective.
\end{theorem}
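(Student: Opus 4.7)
My plan is to proceed by Galois descent, reducing the global problem to a family of sub-embeddings, each indexed by a single $\varGamma$-orbit of cones, and invoking Brion's quasiprojectivity criterion (Theorem~\ref{T:quasiprojective}) together with effective descent for quasiprojective schemes.

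\emph{Setup and decomposition.} For each colored cone $C_Z \in \mc{F}_{\overline{Y}}$, let $\mc{F}_Z$ denote the sub-collection of $\mc{F}_{\overline{Y}}$ consisting of the $\varGamma$-orbit $\{(\sigma(\mc{C}_{Z\hookrightarrow \overline{Y}}),\sigma(\mc{D}_{Z\hookrightarrow \overline{Y}})):\sigma\in\varGamma\}$ together with all of their faces. Since $\mc{F}_{\overline{Y}}$ is $\varGamma$-stable by hypothesis, $\mc{F}_Z$ is a $\varGamma$-stable sub-fan; by Knop's theorem it corresponds to a $G$-stable open sub-embedding $\overline{Y}_Z$ of $\overline{Y}(\overline{k})$, and the collection $\{\overline{Y}_Z\}$ is a $\varGamma$-stable open cover of $\overline{Y}(\overline{k})$ whose pairwise intersections are themselves sub-embeddings specified by fan intersections (hence automatically $\varGamma$-equivariant).

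\emph{Sufficiency.} Assume every $\mc{F}_Z$ is quasiprojective. Then, by Theorem~\ref{T:quasiprojective}, each $\overline{Y}_Z$ is quasiprojective over $\overline{k}$, and moreover averaging the linear forms $\ell_W$ supplied by the criterion over the finite Galois orbit of $C_Z$ yields $\varGamma$-equivariant linear forms, hence a $\varGamma$-linearized ample line bundle on $\overline{Y}_Z$. Effective Galois descent for quasiprojective schemes then produces a $k$-form $Y_{k,Z}$ of $\overline{Y}_Z$. Because the gluing data on the overlaps $\overline{Y}_Z \cap \overline{Y}_{Z'}$ is intrinsic to the combinatorics of the fan, it is already $\varGamma$-equivariant, so the $Y_{k,Z}$ glue to a $k$-form of $\overline{Y}$.

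\emph{Necessity.} Conversely, suppose $\overline{Y}$ admits a $k$-form $Y_k$. Then each $\overline{Y}_Z$, being $\varGamma$-stable and open, descends to an open sub-scheme $Y_{k,Z}$ of $Y_k$. The closed $G$-orbits of $\overline{Y}_Z$ form a single $\varGamma$-orbit (the orbit of the closed $G$-orbit of the simple sub-embedding attached to $C_Z$), so after passing to a finite Galois extension $Y_{k,Z}$ is essentially a $k$-form of a simple embedding. Invoking an equivariant Sumihiro-type theorem, available over perfect $k$, $Y_{k,Z}$ is quasiprojective over $k$, so $\overline{Y}_Z$ is quasiprojective over $\overline{k}$. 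Applying Theorem~\ref{T:quasiprojective} in the other direction extracts linear forms on each cone of $\mc{F}_Z$ satisfying conditions (1) and (2), which is exactly the asserted quasiprojectivity of $\mc{F}_Z$.

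\emph{Main obstacle.} The delicate point is necessity. Galois descent for arbitrary schemes is not effective, and the counterexamples in~\cite{Huruguen} show that without the quasiprojectivity condition on each $\varGamma$-orbit sub-fan a $k$-form genuinely may fail to exist. The perfectness of $k$ enters precisely here: it allows one to produce a $\varGamma$-equivariant ample line bundle on each $\overline{Y}_Z$, and thereby to translate the existence of a $k$-form into the combinatorial data demanded by Brion's criterion. Carrying out this translation $\varGamma$-equivariantly, rather than just cone-by-cone, is the technical heart of the argument.
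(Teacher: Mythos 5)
The paper itself does not prove this statement; its ``proof'' is the single line ``See~\cite[Theorem 2.26]{Huruguen}.'' So there is nothing in the text to compare you against, and your proposal has to be judged on its own. Its architecture --- decompose $\overline{Y}(\overline{k})$ into the $G$-stable quasi-open sub-embeddings $\overline{Y}_Z$ attached to the $\varGamma$-orbit sub-fans, prove quasiprojectivity of each via Theorem~\ref{T:quasiprojective}, and run Galois descent piece by piece --- is indeed the strategy of Huruguen's actual proof, and the sufficiency half is essentially right. One simplification: you do not need to manufacture a $\varGamma$-linearized ample line bundle by averaging the $\ell_W$ (a step that hides a real issue, since equivariance of the piecewise linear function does not immediately give a linearization of the bundle compatible with the descent datum). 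The standard effectivity criterion for Galois descent only asks that every finite $\varGamma$-orbit of points be contained in an affine open; since each $\overline{Y}_Z$ is $\varGamma$-stable and quasiprojective, every orbit lands in one of them and the criterion is met, so the descent datum on all of $\overline{Y}$ (which exists because the fan is $\varGamma$-stable) is effective. No gluing of separately descended pieces is needed.

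The soft spot is your necessity argument, whose mechanism is garbled. ``Passing to a finite Galois extension'' so that $Y_{k,Z}$ becomes ``essentially a $k$-form of a simple embedding'' goes the wrong way: extending the field splits the closed orbits apart and loses exactly the information you need. The correct argument is the opposite: over $k$ the descended open subvariety $X_Z\subset Y_k$ has a \emph{unique} minimal closed $G$-invariant subvariety (the descent of the single $\varGamma$-orbit of closed orbits), so any $G$-stable open subset of $X_Z$ meeting it must contain it, hence must be all of $X_Z$; Sumihiro's theorem, which covers a normal $G$-variety by $G$-stable quasiprojective opens, then forces $X_Z$ itself to be quasiprojective over $k$, hence $\overline{Y}_Z$ is quasiprojective over $\overline{k}$, and Theorem~\ref{T:quasiprojective} converts this back into linear forms on $\mc{F}_Z$. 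Relatedly, your closing remark misplaces the role of perfectness: it is not what produces the equivariant ample bundle, but what guarantees (as the paper itself notes in the introduction) that descent from $\overline{k}$ agrees with descent from $k_s$ and that the homogeneous space has the requisite rational structure. With the necessity paragraph repaired along these lines, your outline is a faithful reconstruction of \cite{Huruguen}.
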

\begin{proof}
See~\cite[Theorem 2.26]{Huruguen}.
\end{proof}

\subsection{Spherical spaces over arbitrary 
fields.}\label{SS:WedhornsClassification}

We will start with 
giving a brief 
summary of 
Wedhorn's work on 
the classification of
spherical spaces.
For all unjustified
claims (and for some definitions)
we refer the reader to
~\cite{Wedhorn} and 
to the references therein.

\begin{definition}
Let $k$ be a field, and let
$G$ be a reductive $k$-group.
Recall that this amounts to 
the requirement that
$G_{\overline{k}}$ is a 
connected reductive group. 
According to~\cite[Remark 2.2]{Wedhorn},
an algebraic space 
$X$ over $k$ with an
action of $G$ 
is $G$-spherical 
if $X_{\overline{k}}$ is 
a spherical 
$G_{\overline{k}}$-variety. 
\end{definition}

Let $\overline{k}$ denote
a fixed algebraic closure of $k$, let $k_s$ denote
the separable closure of $k$, and
let us denote 
by $\varGamma$
the Galois group of 
the extension $k_s/k$. 
(Here, we are intentionally
vague about our choices
because it does not matter
which separable closure 
we choose.)
In the sequel we will
look at continuous 
and linear actions 
of $\varGamma$ 
on some structures. 
When we speak of 
a continuous action of $\varGamma$
on a set $X$, we will 
treat $X$ with the 
discrete topology.
The important point 
here is that if $X$ is a 
finite set, or,
if the action of $\varGamma$
is linear on some 
finite dimensional vector 
space $X$, then 
the action is continuous
if and only if it factors through 
some finite discrete quotient 
of $\varGamma$. 
This fact should alleviate 
a possible pain of confronting 
a large absolute Galois 
group such as $\varGamma$ 
of $\Q_s/\Q$.

If $\overline{Y}$ is a 
spherical $G$-space,
then there 
exists a homogenous 
spherical 
$G$-space $Y$ 
such that 
$\overline{Y}$ is a spherical
embedding of $Y$. 
This actually 
amounts to the
statement that
$Y$ is the unique
{\rm open} minimal $G$-invariant
subspace of $\overline{Y}$.
By definition, a $G$-invariant
subspace in
an algebraic space $\overline{Y}$ is 
{\em minimal}
if there exists no proper 
non-empty $G$-invariant 
subspace of $\overline{Y}$.

\begin{theorem}[Wedhorn]
\label{T:sepclosed}
Let $G$ be a reductive 
$k$-group, and let $\overline{Y}$ be a
spherical $G$-scheme 
viewed as an equivariant
embedding of the 
spherical homogenous
scheme $G/H$. 
If $k$ is separably closed, 
then the assignment 
$
\overline{Y} \rightsquigarrow \overline{Y}_{\overline{k}}
$
induces a bijection 
between 
the isomorphism classes 
of spherical embeddings
of $G/H$ over $k$ and 
the isomorphism classes 
of spherical
embeddings of 
$G/H$ over $\overline{k}$. 
\end{theorem}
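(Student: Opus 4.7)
The plan is to combine Knop's combinatorial classification of spherical embeddings (valid over an algebraically closed field) with a descent argument for the purely inseparable extension $\overline{k}/k$. In characteristic zero the statement is vacuous since $k=\overline{k}$, so I would assume $\mt{char}(k)=p>0$ and note that $\overline{k}/k$ is then purely inseparable (radicial).

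First I would verify that the combinatorial data attached to $G/H$ -- namely the character lattice $\varOmega_{G/H}$, the valuation cone $\rho(\mc{V}_{G/H})\subset \mc{Q}_{G/H}$, the set of colors $\mc{D}_{G/H}$, and the map $\widetilde{\rho}$ -- are unchanged by base change from $k$ to $\overline{k}$. Since $k$ is separably closed, $G$ contains a Borel subgroup $B$ defined over $k$ and a $k$-split maximal torus $T\subset B$, so all the ingredients that define these invariants (the $B$-semiinvariant rational functions on $G/H$, the $B$-stable prime divisors, the $G$-invariant valuations) are already defined over $k$, and purely inseparable base change induces bijections on them because it is a universal homeomorphism on the underlying spaces. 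This step establishes that the Knop-type combinatorial invariants of $G/H$ over $k$ coincide canonically with those of $(G/H)_{\overline{k}}$ over $\overline{k}$, so any colored fan over $\overline{k}$ already lives in data defined over $k$.

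Next I would apply Knop's theorem over $\overline{k}$ to parametrize isomorphism classes of spherical embeddings of $(G/H)_{\overline{k}}$ by colored fans. To transfer this classification to $k$, I would use Wedhorn's descent machinery for spherical spaces. The key mechanism is that base change along the purely inseparable faithfully flat morphism $\Spec\overline{k}\to\Spec k$ is an equivalence on the category of reasonable algebraic spaces used in the paper -- this is precisely where the use of algebraic spaces (rather than schemes or varieties) is decisive, since purely inseparable descent fails for schemes in general but succeeds for spaces, whose structure is controlled by the \'etale topology, which is insensitive to radicial morphisms. Given a spherical $G_{\overline{k}}$-embedding $Z$ with colored fan $\mc{F}$, this equivalence should produce a spherical $G$-space $\overline{Y}$ over $k$ with $\overline{Y}_{\overline{k}}\cong Z$. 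The Galois group $\varGamma=\mt{Gal}(k_s/k)$ is trivial, so Wedhorn's $\varGamma$-stability requirement is vacuous; the advantage over Huruguen's Theorem~\ref{T:Huruguenmain} is that the quasiprojectiveness condition does not need to be verified.

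For faithfulness and fullness of the base-change functor, I would again appeal to descent: morphisms of spherical embeddings over $\overline{k}$ descend uniquely to $k$ because the trivial $\varGamma$-action makes every such morphism automatically $\varGamma$-equivariant. The main obstacle will be the descent step in the second paragraph -- establishing that $(-)_{\overline{k}}$ yields an equivalence between the relevant categories of spherical $G$-spaces. This hinges essentially on Wedhorn's quasi-separatedness hypothesis, which ensures that the universal homeomorphism $X_{\overline{k}}\to X$ induces an equivalence of \'etale sites, and on the fact that the $G$-action and sphericality conditions transfer faithfully across radicial base change.
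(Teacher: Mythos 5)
The paper does not actually prove this theorem; it defers to Wedhorn's paper (``For all unjustified claims\ldots we refer the reader to \cite{Wedhorn}''), so I am judging your argument on its own merits and against the remarks the paper makes around the statement.

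There is a genuine gap, and it sits exactly at what you call the ``key mechanism.'' You assert that base change along $\Spec\overline{k}\to\Spec k$ is an \emph{equivalence} between the relevant categories of algebraic spaces because the \'etale topology is insensitive to radicial morphisms. Topological invariance of the \'etale site only identifies objects that are \'etale over the base (or the small \'etale sites themselves); it says nothing about descending an arbitrary flat, finitely presented $G$-space such as a spherical embedding, which is certainly not \'etale over $\Spec k$. Moreover, the paper explicitly contradicts your conclusion in the sentence immediately following the theorem: the base change functor ``does not give an equivalence of categories,'' with $\mathbf{G}_m$ acting on itself by translation as a counterexample (translations by elements of $\overline{k}^{*}\setminus k^{*}$ are $G_{\overline{k}}$-equivariant automorphisms that do not descend, so the functor is not full). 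The theorem claims only a bijection on \emph{isomorphism classes}, and your claims of fullness and faithfulness are therefore not just unproved but false. The genuinely hard part --- essential surjectivity, i.e.\ that every spherical embedding of $(G/H)_{\overline{k}}$ has a $k$-form --- is exactly what your appeal to descent is supposed to deliver and does not. Your first step (the combinatorial invariants $\varOmega$, $\mc{V}$, $\mc{D}$, $\widetilde{\rho}$ are unchanged under the purely inseparable extension because $B$ and a split maximal torus already exist over the separably closed field $k$) is sound and is indeed part of the real argument; but to close the gap you would need to establish the colored-fan classification directly over $k$ --- for instance by constructing, for each colored fan, the embedding over $k$ by gluing the $B$-stable affine charts whose coordinate rings are already defined over $k$ --- and then match fans on both sides, rather than invoking a categorical equivalence that does not hold.
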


Notice that 
the bijection between
isomorphism classes
that is mentioned in 
Theorem~\ref{T:sepclosed} 
is essentially 
the application of the 
base change functor
from $k$ to $\overline{k}$. 
In general, 
this does not 
give an equivalence 
of categories.
A straightforward example is 
produced by the left
translation action of 
$G=\mathbf{G}_m$
on $Y=\mathbf{G}_m$.
Luckily, since the 
definition of colored
fans works over 
separably closed fields,
and since 
we have faithfully flat descent
upon restriction, 
the classification 
reduces to the 
classification 
over algebraically
closed fields.
The caveat is that 
one needs to consider 
all $G$-invariant
minimal subschemes
of the spherical space.

\begin{corollary}
Let $G$ be a reductive 
$k$-group, and let $\overline{Y}$ be a
spherical $G$-space 
viewed as an equivariant
embedding of the 
spherical homogenous
space $Y:=G/H$. 
If $k$ is separably closed, 
then the assignment 
$\overline{Y} \rightsquigarrow 
(\mc{C}_{\small{Y'\hookrightarrow \overline{Y}}},
\mc{D}_{\small{Y'\hookrightarrow \overline{Y}}})_{Y'}$,
where $Y'$ runs over all
minimal $G$-invariant 
subschemes of $\overline{Y}$,  
is an equivalence 
between the category
of equivariant embeddings of 
$Y$ (over $k$) and the category 
of colored fans
associated with 
$(\mc{Q}_Y,\mc{V}_Y,\mc{D}_Y,\widetilde{\rho})$.
\end{corollary}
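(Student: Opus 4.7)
The plan is to combine Knop's classification of spherical embeddings over an algebraically closed field with Wedhorn's descent result, Theorem~\ref{T:sepclosed}, and to promote it to an equivalence of categories by faithfully flat descent for morphisms.

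First, because $k$ is separably closed, the final note of Subsection~\ref{SS:AlgebraicSpaces} implies that every spherical $G$-space over $k$ is already a $k$-scheme. Hence the category of equivariant embeddings of $Y=G/H$ over $k$ coincides with the category of $G$-scheme embeddings of $G/H$, placing us squarely in the setting of Theorem~\ref{T:sepclosed}. Applying Theorem~\ref{T:sepclosed} yields a bijection between isomorphism classes of spherical embeddings of $G/H$ over $k$ and over $\overline{k}$, induced by base change. Composing this with Knop's theorem over $\overline{k}$ (the theorem of Knop stated earlier in this section) produces a bijection between isomorphism classes of spherical embeddings over $k$ and isomorphism classes of colored fans associated with $(\mc{Q}_Y,\mc{V}_Y,\mc{D}_Y,\widetilde{\rho})$.

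Second, to upgrade this bijection of isomorphism classes to an equivalence of categories, I would check that base change is fully faithful on the relevant morphism sets. Given two spherical embeddings $\overline{Y}_1, \overline{Y}_2$ of $G/H$ over $k$, every morphism of the corresponding colored fans lifts, by Knop, to a unique $G_{\overline{k}}$-equivariant morphism $\varphi_{\overline{k}} : \overline{Y}_{1,\overline{k}} \to \overline{Y}_{2,\overline{k}}$. Since $k$ is separably closed, the extension $\overline{k}/k$ is purely inseparable, hence faithfully flat, and the morphism $\Spec(\overline{k}) \to \Spec(k)$ is a universal homeomorphism. Faithfully flat descent for morphisms (applied to the fpqc cover $\Spec(\overline{k}) \to \Spec(k)$) therefore produces a unique $G$-equivariant morphism $\varphi : \overline{Y}_1 \to \overline{Y}_2$ over $k$ whose base change is $\varphi_{\overline{k}}$. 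This gives both the essential surjectivity and the full faithfulness, assembling the bijection above into an equivalence.

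Third, I must verify that the indexing by minimal $G$-invariant subschemes over $k$ agrees with the indexing by $G_{\overline{k}}$-orbits over $\overline{k}$ that underlies Knop's version of the fan. Using that $\Spec(\overline{k}) \to \Spec(k)$ is a universal homeomorphism, the underlying topological spaces of $\overline{Y}$ and $\overline{Y}_{\overline{k}}$ are canonically identified, and $G$-invariant closed subsets of $\overline{Y}$ correspond bijectively to $G_{\overline{k}}$-invariant closed subsets of $\overline{Y}_{\overline{k}}$. Hence minimal $G$-invariant subschemes match the minimal $G_{\overline{k}}$-invariant ones, which in turn are exactly the closures of $G_{\overline{k}}$-orbits (after passing to reduced structure). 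The function field $k(Y)$ extends compatibly to $\overline{k}(Y_{\overline{k}})$, so the valuation data $\mc{V}_Y$, the colors $\mc{D}_Y$, and the cones in $\mc{Q}_Y$ are preserved, and the colored cone attached to a minimal $G$-invariant subscheme $Y' \hookrightarrow \overline{Y}$ over $k$ coincides with that attached to its image $Y'_{\overline{k}} \hookrightarrow \overline{Y}_{\overline{k}}$ over $\overline{k}$.

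The hard part of the argument will be this last piece of bookkeeping: tracking that $B$-stable non-$G$-stable prime divisors and $G$-invariant valuations transport correctly across the purely inseparable extension $\overline{k}/k$ when the embeddings are only of finite presentation and possibly nonreduced. Once that compatibility is checked, the corollary reduces to a formal composition of Theorem~\ref{T:sepclosed} with Knop's theorem, together with the fpqc descent of morphisms.
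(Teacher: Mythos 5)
Your overall route --- reduce to schemes over the separably closed field, combine Theorem~\ref{T:sepclosed} with Knop's theorem for the bijection on isomorphism classes, and then handle morphisms by descent while matching minimal $G$-invariant subschemes with geometric orbits --- is the same reduction the paper sketches in the discussion surrounding the corollary. But your second paragraph contains a genuine gap, and it is exactly the point the paper flags. You assert that fpqc descent along $\Spec(\overline{k})\to\Spec(k)$ ``produces a unique $G$-equivariant morphism $\varphi$ over $k$ whose base change is $\varphi_{\overline{k}}$.'' Descent does not do this for an arbitrary morphism over $\overline{k}$: it identifies $\mathrm{Mor}_k(\overline{Y}_1,\overline{Y}_2)$ only with the subset of $\mathrm{Mor}_{\overline{k}}(\overline{Y}_{1,\overline{k}},\overline{Y}_{2,\overline{k}})$ whose two pullbacks to $\Spec(\overline{k}\otimes_k\overline{k})$ coincide. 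The paper explicitly warns that base change from $k$ to $\overline{k}$ is \emph{not} an equivalence of categories in general, citing the left translation action of $\mathbf{G}_m$ on itself, where the relevant morphism sets grow from $k^*$ to $\overline{k}^*$; so ``faithfully flat descent for morphisms'' cannot be invoked as a black box to get fullness.

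The gap is repairable, but only by using the special structure of morphisms of embeddings: such a morphism restricts to the identity on the schematically dense open orbit $Y$, hence is unique if it exists (the target being separated), and the same uniqueness over $\overline{k}\otimes_k\overline{k}$ --- where the open orbit remains schematically dense by flatness --- forces the two pullbacks to agree, so that descent then applies. Alternatively, follow the paper's own route: by the remark at the end of Section~\ref{S:FromSpherical}, all the colored-cone data are already definable over the separably closed field $k$ itself, so the fan can be attached directly to $\overline{Y}$ without ever descending morphisms from $\overline{k}$. Your third paragraph (universal homeomorphism, identification of minimal $G$-invariant subschemes with orbit closures, compatibility of valuations and colors) is the right bookkeeping and matches the caveat the paper states.
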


Of course, the theorem
and its corollary
that we just presented
here give us something 
new (compared to Knop's 
theorem) 
only when the characteristic
of $k$ is nonzero. 
Now we proceed 
with the general case and 
assume that 
$G$ is a reductive
$k$-group.
Let $Y$ be 
a spherical homogenous
$G$-variety, and let
$\overline{Y}$ be a 
spherical embedding of $Y$.
Both of $Y$ and
$\overline{Y}$ are assumed 
to be defined
over $k$.
Note that Borel subgroups
always exist over 
separably closed fields, 
whence we fix 
a Borel subgroup $B$
in $G$ despite 
the fact that $B$ may not
have any $k$-rational points.
This is where we
start to notice a departure 
from Huruguen's work.

There is a natural 
action of the Galois group 
$\varGamma$ on 
the space of $B$-semiinvariants
$k_s(Y)^{(B)}$.
In particular, $\varGamma$ 
acts on the $k_s$-vector space 
$\varOmega_{Y_{k_s}}$
continuously and linearly. 
Moreover, it acts continuously
on the valuation cone $\mc{V}_{Y_{k_s}}$
as well as on the set of colors 
$\mc{D}_{Y_{k_s}}$, and the 
maps $\rho: \mc{V}_{Y_{k_s}}
\rightarrow \mc{Q}_{Y_{k_s}}$
and 
$\widetilde{\rho}: \mc{D}_{Y_{k_s}}
\rightarrow \mc{Q}_{Y_{k_s}}$
are $\varGamma$-equivariant.

\begin{itemize}
\item A colored fan 
$\mc{F}_{\overline{Y}_{k_s}}$
is said to be {\em $\varGamma$-invariant} 
if its colored cones are permuted by
the action of $\varGamma$.
\end{itemize}

\begin{theorem}[Wedhorn]
Let $G$ be a reductive 
$k$-group, and let $\overline{Y}$ be a
spherical $G$-space 
viewed as an equivariant
embedding  
of the spherical homogenous
$G$-space $Y:=G/H$, 
which is defined
over $k$. 
Then the assignment 
$\overline{Y}_{k_s}  \rightsquigarrow 
(\mc{C}_{\small{Y'\hookrightarrow \overline{Y}_{k_s}}},
\mc{D}_{\small{Y'\hookrightarrow \overline{Y}_{k_s}}})_{Y'}$,
where $Y'$ runs over all
minimal $G$-invariant 
subschemes of $\overline{Y}_{k_s}$, 
induces an equivalence 
between the category
of equivariant embeddings of $Y$ 
over $k$ and the category 
of $\varGamma$-invariant 
colored fans associated with 
$(\mc{Q}_{Y_{k_s}},\mc{V}_{Y_{k_s}},
\mc{D}_{Y_{k_s}},\widetilde{\rho})$.
\end{theorem}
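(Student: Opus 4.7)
The plan is to reduce the theorem to the separably closed case already established (the corollary preceding this theorem), and then handle the passage from $k_s$ to $k$ by Galois descent in the category of algebraic spaces. The central reason this works in Wedhorn's setting, but requires quasiprojectiveness hypotheses in Huruguen's setting, is that faithfully flat descent is unconditionally effective for algebraic spaces, whereas for schemes it generally requires a polarization. This is precisely the feature of algebraic spaces the introduction advertises.

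In the forward direction, I would start with a spherical embedding $\overline{Y}$ of $Y=G/H$ over $k$, base change to $k_s$ to obtain a spherical $G_{k_s}$-embedding $\overline{Y}_{k_s}$ of $Y_{k_s}$, and then form its colored fan $\mathcal{F}_{\overline{Y}_{k_s}}$ via the classification already described for separably closed fields. The Galois group $\Gamma$ acts on $\overline{Y}_{k_s}$ through the second factor, and by functoriality this action permutes the $G_{k_s}$-orbits as well as the $B_{k_s}$-stable prime divisors, and it is equivariant with respect to $\rho$ and $\widetilde\rho$ because $k_s(Y)^{(B)}$, $\varOmega_{Y_{k_s}}$ and $\mathcal{Q}_{Y_{k_s}}$ all carry compatible continuous $\Gamma$-actions. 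Thus $\mathcal{F}_{\overline{Y}_{k_s}}$ is $\Gamma$-invariant, and one checks functoriality in $\overline{Y}$.

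Conversely, given a $\Gamma$-invariant colored fan associated with $(\mathcal{Q}_{Y_{k_s}},\mathcal{V}_{Y_{k_s}},\mathcal{D}_{Y_{k_s}},\widetilde\rho)$, the corollary for separably closed fields produces a unique (up to canonical isomorphism) spherical embedding $\overline{Y}_{k_s}$ of $Y_{k_s}$ over $k_s$ realizing this fan. The $\Gamma$-action on the fan lifts, again by the uniqueness of the classification, to a system of $\Gamma$-equivariant isomorphisms $\sigma^*\overline{Y}_{k_s}\xrightarrow{\sim}\overline{Y}_{k_s}$ extending the canonical $\Gamma$-action on $Y_{k_s}=G_{k_s}/H_{k_s}$. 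Because all compatibilities (cocycle condition, continuity, action through a finite quotient) are forced by naturality and by the finiteness of the fan, this produces an fpqc (in fact $k_s/k$) descent datum on the algebraic space $\overline{Y}_{k_s}$ compatible with the descent datum on $Y_{k_s}$ already inherited from $Y$.

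The hard step, which is where the use of algebraic spaces (rather than schemes) is essential, is the effectivity of this descent datum: one must produce an actual algebraic $k$-space $\overline{Y}$ whose base change to $k_s$ is $\overline{Y}_{k_s}$ together with its $\Gamma$-action. For schemes this would require the quasiprojectivity hypothesis appearing in Huruguen's Theorem~\ref{T:Huruguenmain}, but in the category of quasiseparated algebraic spaces, fpqc descent is unconditionally effective by the standard results collected in~\cite{StacksProject}. Once the object $\overline{Y}$ has been produced, the $G$-action and the open immersion $Y\hookrightarrow \overline{Y}$ descend by the same mechanism, since the formation of the action morphism and of the open orbit commute with base change. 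The verifications that the two functors are mutually quasi-inverse then reduce to the uniqueness part of the classification over $k_s$ together with the faithfulness of $-\otimes_k k_s$ on morphisms of descended spaces, which is a routine application of fpqc descent for morphisms; hence the equivalence of categories.
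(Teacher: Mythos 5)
The paper does not actually prove this statement: it is quoted as Wedhorn's theorem, with the reader referred to \cite{Wedhorn} for all justifications, so there is no in-paper argument to compare yours against. Judged on its own terms, your outline identifies the correct strategy and, in particular, the correct crux: the forward direction is routine functoriality of the colored-fan invariants under the $\varGamma$-action; the converse direction builds the embedding over $k_s$ from the corollary for separably closed fields, promotes the $\varGamma$-invariance of the fan to a descent datum via the uniqueness clause of that classification, and then the whole point of working with algebraic spaces is that this descent datum is effective without the quasiprojectivity hypothesis that Huruguen needs in Theorem~\ref{T:Huruguenmain}. This matches the mechanism the paper's surrounding discussion advertises (cf.\ the remarks after Theorem~\ref{T:sepclosed} about faithfully flat descent and the reduction to the separably closed case).

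One imprecision worth fixing: fpqc descent is \emph{not} unconditionally effective for algebraic spaces, and $\Spec(k_s)\rightarrow\Spec(k)$ is fpqc but not fppf when the extension is infinite. What is true, and what the argument actually needs, is effectivity of descent along fppf (in particular finite \'etale) coverings of algebraic spaces, equivalently the existence of quotients by free actions of finite groups in that category. You do gesture at the fix --- the $\varGamma$-action on the finite fan and on $\mc{Q}_{Y_{k_s}}$ is continuous, and $\overline{Y}_{k_s}$ is of finite presentation, so everything, including the descent datum, is defined over some finite Galois subextension $k'/k$ --- but this reduction should be stated as a step of the proof rather than as a parenthetical, since without it the appeal to descent is to a false general statement. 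With that repair, and with the (routine but necessary) verification that the descended space is again a spherical $G$-space over $k$ whose base change recovers the given fan, your sketch is a faithful account of how such a theorem is proved.
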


\section{Reductive monoids 
over arbitrary fields}\label{S:ReductiveMonoids}

In this section we will consider
the reductive monoids that are 
defined over arbitrary fields. 
We will show how Wedhorn's 
theorems are applicable to
the algebraic monoid setting.

\begin{definition}\label{D:reductivemonoid}
A {\em reductive} $k$-monoid
is a $k$-monoid whose unit-group
is a reductive $k$-group in the 
sense of Definition~\ref{D:connectedreductive}.
\end{definition}

In particular, according to 
our Definition~\ref{D:reductivemonoid},
the unit-group of a reductive $\overline{k}$-monoid
is a connected reductive monoid,
conforming with our tacit assumption
from the introduction as well
as with that of~\cite{Rittatore98}.

\begin{remark}
Let $M$ be a reductive monoid 
defined over an algebraically
closed field, and let $G$ denote its 
unit-group. The following results are 
recorded in~\cite{Rittatore98}:
\begin{enumerate}
\item $G$
is dense in $M$; 
\item $M$ is 
affine;
\item the reductive monoids
are exactly the affine 
$G\times G$-embeddings
of reductive groups;
\item the commutative reductive
monoids are exactly the affine
embeddings of tori;
\item the isomorphism classes
of reductive monoids 
with unit-group $G$ are
in bijection with the strictly 
convex polyhedral
cones of $\mc{Q}_G$
generated by all of the colors 
and a finite set of elements
from $\mc{V}_G$. 
\end{enumerate}
\end{remark}

\vspace{.5cm}

Now we propose 
a definition for `monoid algebraic spaces.' 
Probably this definition 
exists in the literature, 
however, we could not locate it. 
For our monoid space definition, 
once again, we will 
relax the definition of 
a group algebraic space 
(as given in Stacks Project Tag 043G).

\begin{definition}
Let $S$ be a scheme, and 
let $B$ be an algebraic space
that is separated over $S$.
\begin{itemize}
\item A {\em monoid algebraic space 
over $B$} is a pair $(M,m)$, 
where $M$ is a separated algebraic 
space over $B$ and 
$m: M\times_B M \rightarrow M$
is a morphism of algebraic spaces 
over $B$ with the property
that, for every scheme $T$ 
over $B$, the pair $(M(T),m)$ is 
a monoid.  
\item A {\em morphism} $\psi : (M,m)\rightarrow (M',m')$
of monoid algebraic spaces over $B$ 
is a morphism $\psi: M\rightarrow M'$
of algebraic spaces such that, for every $T/B$,
the induced map $\psi: M(T)\rightarrow M'(T)$ 
is a homomorphism of monoids.
\end{itemize}
\end{definition} 

\begin{definition}
A {\em reductive monoid space} over a scheme $S$ 
is a monoid algebraic space $M$ over $S$ such that 
$M\rightarrow S$ is flat, of finite presentation over $S$, 
and for all $s\in S$ the geometric fiber $M_{\overline{s}}$ 
is a reductive $\overline{k}$-monoid.  
\end{definition}

Clearly, if a monoid algebraic space $M$ 
over a field $k$ is a scheme,
then $M$ is a $k$-monoid 
in the sense of Definition~\ref{D:k-monoids}
but the converse is not true. 
Indeed, in~\cite{Huruguen} Huruguen  
has found an example of a
smooth toric variety of dimension 3 
that is split over a quadratic 
extension of $k$, having  
no $k$-forms. This pathological
example shows that even 
for the purposes of classifying  
reductive monoids over an arbitrary
field one needs to venture into the category of algebraic spaces.

Extending Rittatore's classification
to reductive monoid spaces,
we record the following observations
which are simple corollaries
of Wedhorn's theorems
combined with Rittatore's results. 

\vspace{.5cm}

Recall that the 
reductive monoids 
with unit-group $G$
are $G\times G$-equivariant
embeddings of $G$.
When we speak of `colors'
in this context,
we always mean the colors
of $G$ as a $G\times G$-spherical
$k$-group.

\begin{theorem}\label{T:Extension to monoids}
Let $k$ be a field, and 
let $G$ denote a 
reductive $k$-group. 
Let $M$ be a reductive 
monoid space 
with $G$ as the group of invertible 
elements.

\begin{enumerate}
\item 
If $k$ is separably closed, then 
the assignment 
$M\rightsquigarrow 
(\mc{C}_{Y'\hookrightarrow M},\mc{D}_{G})_{Y'}$,
where $Y'$ runs over all 
minimal $G\times G$-invariant 
subschemes of $M$, is an 
equivalence between 
the category of reductive monoid 
spaces over $k$ and the 
category of strictly 
convex colored 
polyhedral cones of $\mc{Q}_G$
generated by all of the colors of $G$
and a finite set of elements from $\mc{V}_G$. 

\item 
Let $k_s$ be a separable closure of $k$. 
If $k$ is properly contained 
in $k_s$, then the following categories are equivalent:
\begin{enumerate}
\item 
the category of reductive monoid 
spaces over $k$ with unit-group $G$;
\item the 
category of 
$\varGamma$-invariant 
strictly 
convex colored 
polyhedral cones of $\mc{Q}_{G_{k_s}}$
generated by all of the colors of $G_{k_s}$
and a finite set of elements of $\mc{V}_{G_{k_s}}$. 
Here, $\varGamma$ is the 
Galois group of 
the extension $k_s/k$. 
\end{enumerate}
\end{enumerate}
\end{theorem}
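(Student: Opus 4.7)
The plan is to deduce this theorem by combining Rittatore's characterization of reductive monoids among spherical embeddings with the two descent statements of Wedhorn already recorded above (the Corollary after Theorem~\ref{T:sepclosed} and the final theorem of Subsection~\ref{SS:WedhornsClassification}). The combinatorial side is already in place; the only work is to cut out, inside the category of spherical embeddings, the subcategory that corresponds to reductive monoid spaces.

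First I would handle part (1). Over a separably closed field $k$, a reductive monoid space $M$ with unit group $G$ is, on each geometric fiber, an affine simple $G \times G$-equivariant embedding of $G$ by Theorem~\ref{T:RittatoreBrionCan} and the remarks recalled at the top of Section~\ref{S:ReductiveMonoids}. Since $M$ is flat and of finite presentation over $\Spec(k)$, hence a $k$-scheme by the last paragraph of Subsection~\ref{SS:AlgebraicSpaces}, the Corollary of Theorem~\ref{T:sepclosed} applies to $Y = G$ viewed as a $G \times G$-homogeneous space and yields an equivalence between spherical embeddings of $G$ over $k$ and colored fans associated with $(\mc{Q}_G, \mc{V}_G, \mc{D}_G, \widetilde{\rho})$. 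Inside this equivalence, Rittatore's classification (item (5) in the remark preceding our definitions) singles out precisely those fans that consist of a single strictly convex colored cone $(\mc{C}, \mc{D})$ with $\mc{D} = \mc{D}_G$ and $\mc{C}$ generated by $\widetilde{\rho}(\mc{D}_G)$ together with finitely many elements of $\rho(\mc{V}_G)$; these are exactly the affine simple $G \times G$-embeddings, i.e.\ the reductive monoids. One then checks that the multiplication morphism descends as part of the equivalence, since it is determined on the dense open orbit by the group law of $G$.

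For part (2), I would apply the last theorem of Subsection~\ref{SS:WedhornsClassification} to $Y = G_{k_s}$ and to equivariant embeddings $\overline{Y}_{k_s}$ coming from the single-cone fans isolated in part (1). The equivalence of Wedhorn then converts the classification problem over $k$ into the classification of $\varGamma$-invariant colored fans over $k_s$ of the monoid type described in (1). Invariance under $\varGamma$ is automatic at the level of the whole fan (it consists of one cone and its faces), so the content of the $\varGamma$-invariance is that the generating set of $\mc{C}$ inside $\mc{Q}_{G_{k_s}}$ and the color set $\mc{D}_{G_{k_s}}$ are permuted by $\varGamma$; the latter is automatic since $\mc{D}_G$ is the full set of colors and $\varGamma$ acts on it by permutations. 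Finally, one verifies that the monoid multiplication on the unit group is $\varGamma$-equivariant, hence descends to a morphism $M \times_k M \to M$ over $k$, giving $M$ the structure of a reductive monoid space; flatness and finite presentation over $\Spec(k)$ hold automatically because $M$ is a spherical space over a field.

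The main obstacle I expect is not in the combinatorics but in verifying that Rittatore's structural statements (affineness of $M$, density of $G$ in $M$, characterization as affine simple $G \times G$-embeddings) propagate correctly from the geometric fibers to the level of algebraic spaces over non-algebraically-closed fields. In characteristic zero or over perfect fields this causes no trouble, but in positive characteristic one should invoke Theorem~\ref{T:RittatoreBrionCan}, applied on geometric fibers, together with the openness and constructibility properties of the spherical locus noted immediately after Definition~\ref{D:Wedhorns}. Once this is in hand, both parts of the theorem fall out of the two Wedhorn equivalences by restricting to the subcategory cut out by Rittatore's cone condition.
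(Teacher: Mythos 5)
Your proposal is correct and follows exactly the route the paper intends: the paper gives no written proof, presenting the theorem as a ``simple corollary of Wedhorn's theorems combined with Rittatore's results,'' and your argument---applying the Corollary of Theorem~\ref{T:sepclosed} (resp.\ the final theorem of Subsection~\ref{SS:WedhornsClassification}) to $Y=G$ as a $G\times G$-homogeneous space and then cutting out the monoid subcategory via Rittatore's single-cone characterization---is precisely that combination, with the descent and $\varGamma$-equivariance details filled in. No discrepancy with the paper's approach.
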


It is now desirable to know 
exactly which reductive monoid
schemes over a field have 
a $k$-form.

\begin{theorem}
Let $k$ be a perfect field, 
let $M$ be a reductive monoid 
defined over $\overline{k}$ 
with unit-group $G$, and 
assume that $G$ is defined over $k$. 
In this case, $M$ has a $k$-form
if and only if its colored
fan, which is a strictly convex
polyhedral cone, is stable under
the action of absolute Galois
group of $k\subset \overline{k}$. 
\end{theorem}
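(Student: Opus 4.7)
The plan is to derive this from Huruguen's criterion (Theorem~\ref{T:Huruguenmain}) by verifying that, in the reductive monoid setting, the ``quasiprojective-with-respect-to-$\Gamma$'' condition of that theorem is always automatic, so that Huruguen's hypothesis collapses onto mere Galois stability of the colored fan.

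First I would invoke Rittatore's classification, recorded after Definition~\ref{D:reductivemonoid}, to pin down the combinatorial shape of $M$: a reductive $\overline{k}$-monoid with unit-group $G$ is a simple $G\times G$-equivariant embedding of $G$, and its colored fan $\mc{F}_M$ is a single strictly convex colored polyhedral cone $(\mc{C},\mc{D}_G)$ in $\mc{Q}_G$---generated by all colors of $G$ and a finite subset of $\mc{V}_G$---together with all of its faces. This justifies the parenthetical claim in the statement that the fan is a strictly convex polyhedral cone.

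Second, because $\mc{D}_G$ appears in full (so it is permuted as a whole set by $\Gamma$) and $(\mc{C},\mc{D}_G)$ is the unique maximal cone of $\mc{F}_M$, the fan $\mc{F}_M$ is $\Gamma$-stable if and only if $(\mc{C},\mc{D}_G)$ is $\Gamma$-stable; faces of a $\Gamma$-stable cone are then permuted among themselves, so no further condition is needed. This matches the stability hypothesis of Huruguen's theorem with the stability hypothesis in the statement.

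Third, and this is the substantive step, I would verify Huruguen's quasiprojectivity requirement cone-by-cone. Fix any cone $C_Z \in \mc{F}_M$ and let $\mc{F}'$ denote the colored fan formed by the $\Gamma$-orbit of $C_Z$ together with all faces of those cones. Since every cone of $\mc{F}_M$ is a face of the maximal cone $(\mc{C},\mc{D}_G)$, the same holds for every cone of $\mc{F}'$, so $\mc{F}'$ is a subfan of $\mc{F}_M$ closed under taking faces. By the Luna--Vust correspondence, such subfans correspond to open $G\times G$-invariant sub-embeddings $M' \subseteq M$. Now $M$ is affine by Theorem~\ref{T:RittatoreBrionCan}, hence quasiprojective, hence every such open subvariety $M'$ is also quasiprojective. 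Brion's criterion (Theorem~\ref{T:quasiprojective}) then supplies the required linear forms witnessing quasiprojectivity of $\mc{F}'$, and the Huruguen condition holds for every $C_Z$.

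The main obstacle is the third step, specifically the identification of a ``$\Gamma$-orbit-closed sub-collection of the fan plus faces'' with an open equivariant sub-embedding of $M$. This is a standard consequence of the Luna--Vust correspondence, but because we are working in the mixed variety/algebraic-space setting of Section~\ref{SS:WedhornsClassification}, one must cite it cleanly. Granting that identification, affineness of $M$ together with Brion's criterion forces quasiprojectivity of every subfan appearing in Huruguen's condition, so Huruguen's theorem collapses to the pure Galois-stability statement, completing the proof.
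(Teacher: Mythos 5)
Your proposal is correct and follows essentially the same route as the paper: the paper's proof likewise invokes Theorem~\ref{T:RittatoreBrionCan} to conclude that $M$ is affine, deduces that its colored fan is automatically quasiprojective, and then applies Huruguen's criterion (Theorem~\ref{T:Huruguenmain}) so that only Galois stability remains. Your cone-by-cone verification of Huruguen's hypothesis via open sub-embeddings simply makes explicit a step the paper compresses into one sentence.
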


\begin{proof}
By Theorem~\ref{T:RittatoreBrionCan},
we know that $M$ is affine, therefore,
its colored fan is automatically quasiprojective. 
Now our result follows from Theorem~\ref{T:Huruguenmain}.
\end{proof}

\subsection{$k$-forms of lined closures.}\label{SS:LinedClosure}

For the next application we 
restrict our attention to the
field of complex numbers, 
and we assume that the reader 
is familiar with the highest 
weight theory.

It is well known that any complex irreducible
affine monoid $M$ admits a faithful 
finite dimensional rational monoid representation. 
In other words, there exists a finite dimensional
vector space $V$ and an injective  
monoid homomorphism
$$
\rho: M \rightarrow End(V),
$$
which is a morphism of varieties 
(see~\cite{Putcha}). 
We notice, in the light of Remark~\ref{R:Sumihiro},
that this fact holds true more generally 
for all irreducible affine $k$-monoids,
where $k$ is an arbitrary field. 
In particular, 
$(V,\rho\vert_G)$ is a faithful rational 
representation of the unit-group $G$,
and $M\cong \overline{\rho(G)}$ in 
$End(V)$. 
In this case, we will write 
\begin{align}\label{A:notation}
M= M_V.
\end{align}

Now, let $V_\lambda$ denote the 
irreducible representation of $G$ 
with highest weight $\lambda$. 
The {\em saturation of $\lambda$},
denoted by $\varSigma_\lambda$,  
is the set of all dominant weights
that are less than or equal to $\lambda$,
$$
\varSigma_\lambda := \{ \mu :\ 
\mu \text{ is dominant and }\ \mu \leq \lambda \}.
$$
Let $V_{\varSigma_\lambda}$ denote the representation 
$\oplus_{\mu \in \varSigma} V_\mu$, 
and let $\mc{M}_{\lambda}$
denote the reductive monoid
defined by $V_{\varSigma_\lambda}$ as in 
in (\ref{A:notation}). 
In a similar manner, we will denote 
$M_{V_{\lambda}}$ by $M_\lambda$. 
(These are special cases of the 
``multi-lined closure'' construction of 
Li and Putcha in~\cite{LiPutcha}.)
Clearly, both of the monoids $M_\lambda$ and 
$\mc{M}_\lambda$ are reductive. 

In~\cite{DeConcini}, De Concini analyzed 
the geometric properties of $\mc{M}_{\lambda}$
in relation with that of $M_\lambda$, and 
he proved the following theorem.
\begin{theorem}[DeConcini]
\begin{enumerate}
\item $\mc{M}_\lambda$ is a normal 
variety with rational singularities. 
\item $\mc{M}_\lambda$ is the normalization 
of $M_\lambda$. 
\item $\mc{M}_\lambda$ and 
$M_\lambda$ are equal if and only if 
$\lambda$ is minuscule, that is to say, 
$\varSigma_\lambda = \{\lambda \}$.
\end{enumerate}
\end{theorem}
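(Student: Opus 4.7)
The plan is to identify both $\C[M_\lambda]$ and $\C[\mc{M}_\lambda]$ as explicit $G\times G$-stable subalgebras of the Peter--Weyl decomposition of $\C[G]$, and then to compare them via saturation of an associated weight monoid. I would first observe that for any finite-dimensional rational $G$-module $W$ with associated monoid $M_W = \overline{\rho_W(G)} \subseteq \mathrm{End}(W)$, the ring $\C[M_W]$ coincides with the subalgebra of $\C[G]$ generated by the matrix coefficients of $W$. Since multiplication in $\C[G] = \bigoplus_{\mu \in X^*(T)^+} V_\mu \otimes V_\mu^*$ respects the $G\times G$-isotypic decomposition (via Littlewood--Richardson), one gets
\[
\C[M_W] \;=\; \bigoplus_{\mu \in \Pi_W} V_\mu \otimes V_\mu^*,
\]
where $\Pi_W \subseteq X^*(T)^+$ is the submonoid of dominant weights $\mu$ such that $V_\mu$ appears as a constituent of some tensor power $W^{\otimes n}$ (and $V_\mu^* \subset (W^*)^{\otimes n}$ for the same $n$). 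Applied to $W = V_\lambda$ and $W = V_{\varSigma_\lambda}$ this yields two weight monoids $\Pi_\lambda \subseteq \wil{\Pi}_\lambda$.

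The main representation-theoretic input is that every dominant weight $\mu \leq n\lambda$ occurs in $V_\lambda^{\otimes n}$, from which I would deduce that $\wil{\Pi}_\lambda$ is precisely the saturation of $\Pi_\lambda$ in $X^*(T)^+$, i.e.\ the set of dominant $\mu$ with $n\mu \in \Pi_\lambda$ for some $n \geq 1$. From this parts (1) and (2) follow in tandem. For (1), the saturated weight monoid $\wil{\Pi}_\lambda$ forces $\C[\mc{M}_\lambda]$ to be integrally closed in its field of fractions, giving normality; rational singularities then follow from the general fact that in characteristic zero every normal reductive monoid has rational singularities, via the Luna--Vust/Rittatore theory of affine spherical embeddings (cf.\ Theorem~\ref{T:RittatoreBrionCan} and the references cited therein, or alternatively by realizing $\mc{M}_\lambda$ as a fiber of Vinberg's enveloping monoid, whose rational singularities are known by Frobenius-splitting arguments). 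For (2), the inclusion $\Pi_\lambda \subseteq \wil{\Pi}_\lambda$ gives a $G\times G$-equivariant inclusion $\C[M_\lambda] \hookrightarrow \C[\mc{M}_\lambda]$ and hence a morphism $\mc{M}_\lambda \to M_\lambda$ restricting to the identity on $G$; saturation supplies, for each $\mu \in \wil{\Pi}_\lambda$, a uniform $n$ so that each matrix coefficient of $V_\mu$ satisfies an integral equation over $\C[M_\lambda]$, whence the morphism is finite and birational, identifying $\mc{M}_\lambda$ with the normalization of $M_\lambda$.

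Part (3) is then formal. By (2), $M_\lambda = \mc{M}_\lambda$ if and only if $M_\lambda$ is already normal, which in turn is equivalent to $\Pi_\lambda = \wil{\Pi}_\lambda$. Using that $\wil{\Pi}_\lambda$ contains every dominant $\mu \leq \lambda$ while $\Pi_\lambda$ is generated by $\lambda$ under tensor products, a short combinatorial check shows $\Pi_\lambda = \wil{\Pi}_\lambda$ exactly when $\varSigma_\lambda = \{\lambda\}$, i.e.\ when $\lambda$ is minuscule. The hard part I anticipate is the rational-singularity claim in (1): the weight-monoid argument handles normality cleanly but does not give rational singularities by itself, so the main obstacle is producing either a suitably compatible Frobenius splitting or an equivariant resolution of $\mc{M}_\lambda$ (say by pulling back a resolution of Vinberg's enveloping monoid) whose higher direct images of $\mc{O}$ vanish.
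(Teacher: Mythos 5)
First, on the comparison itself: the paper does not prove this theorem --- it is quoted from \cite{DeConcini} with no argument given --- so the only meaningful benchmark is De Concini's published proof, and there your overall architecture is the right one: realize $\C[M_W]$ inside the Peter--Weyl decomposition of $\C[G]$ as a sum of isotypic blocks $V_\mu\otimes V_\mu^*$ indexed by a weight monoid, and reduce everything to comparing two such monoids. Two setup caveats, though. The weight monoid must be \emph{graded}, i.e.\ taken in $\Z_{\geq 0}\times X^*(T)^+$ with the first coordinate recording the tensor power $n$; this is precisely what the ``line'' in \emph{lined} closure $\overline{\C^*\rho(G)}$ supplies. For $G$ semisimple your ungraded $\Pi_W$ loses this information (and in fact $\overline{\rho(G)}=\rho(G)$ is already closed in $End(V)$, since $\rho(G)\subseteq SL(V)$), so part (3) degenerates as you have set things up. Also, the step ``saturated weight monoid $\Rightarrow$ $\C[\mc{M}_\lambda]$ integrally closed'' is the Renner--Vinberg normality criterion for reductive monoids, a genuine theorem that must be invoked rather than treated as formal.

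The serious gap is your ``main representation-theoretic input.'' The claim that every dominant $\mu\leq n\lambda$ occurs in $V_\lambda^{\otimes n}$ is false, and it is inconsistent with the theorem you are proving: taking $n=1$ it asserts $V_\mu\subseteq V_\lambda$ for every dominant $\mu\leq\lambda$, which by irreducibility forces $\varSigma_\lambda=\{\lambda\}$ for \emph{every} $\lambda$; worse, if the claim held then $\Pi_\lambda$ would already be saturated, so $M_\lambda=\mc{M}_\lambda$ always and part (3) would be vacuous. What is actually needed are two separate statements: first, that every dominant $\mu\leq n\lambda$ occurs in $V_{\varSigma_\lambda}^{\otimes n}$ --- tensor powers of the \emph{saturated} module, not of $V_\lambda$ --- which is the nontrivial tensor-product lemma at the heart of De Concini's argument and is what identifies $\wil{\Pi}_\lambda$ with $\{(n,\mu):\mu\leq n\lambda\}$ and yields normality; and second, that for each such pair some multiple $(mn,m\mu)$ lies in $\Pi_\lambda$, which is the saturation statement giving integrality, birationality, and hence part (2). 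Once these are in place your finiteness mechanism and part (3) go through (the minuscule case being exactly the case where the tensor decomposition is governed by the weights of $V_\lambda$), and you are right to flag rational singularities as requiring an external input (normal affine spherical varieties in characteristic zero have rational singularities). But as written, the central lemma would sink the proof.
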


We finish our paper with a theorem whose 
proof will be given somewhere else.

\begin{theorem}
The reductive monoid $M_\lambda$, 
hence its normalization $\mc{M}_\lambda$ have an 
$\R$-form if and only if there exists an involutory
automorphism $\theta$ of the 
reductive unit-group $G$ of $M_\lambda$
such that $\theta^* \lambda = -\lambda$.
\end{theorem}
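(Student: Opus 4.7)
The approach is to invoke the classification Theorem~\ref{T:Extension to monoids} over the extension $\R\subset\C$ and then to translate $\varGamma$-stability of the colored cone of $M_\lambda$ into the stated condition on $\lambda$. The monoid $M_\lambda$ is affine by Theorem~\ref{T:RittatoreBrionCan}, so its colored fan consists of a single strictly convex polyhedral cone $\mc{C}_{M_\lambda}\subset\mc{Q}_G$ containing all colors of $G$, and the quasiprojectivity hypothesis of Theorem~\ref{T:Huruguenmain} is automatic. Consequently, $M_\lambda$ admits an $\R$-form if and only if there exists an antiholomorphic involution $\sigma:G(\C)\to G(\C)$ whose induced action on $\mc{Q}_G$ preserves $\mc{C}_{M_\lambda}$.

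Next, I would describe $\mc{C}_{M_\lambda}$ dually via the weight monoid $\Lambda(M_\lambda)$, consisting of those dominant $\mu\in X^*(T)_+$ such that $V_\mu\subset V_\lambda^{\otimes n}$ for some $n\geq 1$; then $\mc{C}_{M_\lambda}$ is the cone in $\mc{Q}_G$ dual to $\Q_{\geq 0}\Lambda(M_\lambda)\subset X^*(T)_\Q$. The ray $\Q_{\geq 0}\lambda$ is a distinguished extremal edge of $\Q_{\geq 0}\Lambda(M_\lambda)$: namely, $\lambda$ is the unique dominant weight $\mu$ such that the highest-weight $B$-semi-invariant of $V_\mu^*\otimes V_\mu\subset\C[G]$ serves as an algebra generator of $\C[M_\lambda]\subset\C[G]$, reflecting the defining role of $V_\lambda$ in the embedding $M_\lambda\hookrightarrow\mathrm{End}(V_\lambda)$.

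To translate Galois stability into the involution condition, I would fix a compact real form $K\subset G$ with associated antiholomorphic Cartan involution $\sigma_K$, which acts as $-\mathrm{id}$ on $X^*(T)$. Every $\R$-form of $G$ decomposes (after an inner conjugation) as $\sigma=\theta\circ\sigma_K$, where $\theta$ is a holomorphic involution commuting with $\sigma_K$; conversely, by a standard Cartan--Mostow conjugacy argument, any holomorphic involution of $G$ may be conjugated to one commuting with $\sigma_K$. On the character lattice this yields $\sigma^*=-\theta^*$, and $\sigma$-stability of $\mc{C}_{M_\lambda}$ is equivalent, by the distinguished-ray property of $\lambda$, to $\sigma^*(\lambda)=\lambda$, i.e., $\theta^*(\lambda)=-\lambda$. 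Both implications of the theorem follow. The conclusion for the normalization $\mc{M}_\lambda$ is automatic since $\mc{M}_\lambda$ and $M_\lambda$ share the same colored cone in $\mc{Q}_G$.

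The principal technical obstacle I anticipate is the extremality argument in the second paragraph. The cone $\Q_{\geq 0}\Lambda(M_\lambda)$ may have several extremal rays --- for instance $\Q_{\geq 0}\omega_1,\ldots,\Q_{\geq 0}\omega_{n-1}$ when $\lambda=\omega_1$ for $SL_n$ --- and a priori a cone automorphism could permute them. Pinning down the ray of $\lambda$ requires that the specific representation $V_\lambda$ is part of the data defining $M_\lambda\hookrightarrow\mathrm{End}(V_\lambda)$, not merely the isomorphism class of $M_\lambda$ as an abstract $G\times G$-monoid; this is what forces $\sigma^*(\lambda)=\lambda$ rather than $\sigma^*(\lambda)$ merely lying on some other extremal ray.
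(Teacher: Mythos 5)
The paper does not actually prove this statement: it is the closing theorem, and the author explicitly defers the proof (``whose proof will be given somewhere else''), so there is no argument in the paper to compare yours against. On its own terms, your overall strategy --- reduce, via Theorem~\ref{T:Extension to monoids} and Theorem~\ref{T:Huruguenmain} with quasiprojectivity automatic from affineness (Theorem~\ref{T:RittatoreBrionCan}), to Galois-stability of the single colored cone of $M_\lambda$, and then decompose a real structure as $\sigma=\theta\circ\sigma_K$ --- is the natural one and is surely close to what the author intends. But two steps, as written, do not hold up.

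First, your resolution of the ``principal technical obstacle'' is the wrong move. By the very classification theorems you invoke, the existence of an $\R$-form of $M_\lambda$ is a property of $M_\lambda$ as an abstract $G\times G$-variety, governed entirely by its colored cone; you are not entitled to remember the embedding $M_\lambda\hookrightarrow \mathrm{End}(V_\lambda)$ as additional data. If $\sigma^*$ stabilized the cone while carrying the ray of $\lambda$ to a different extremal ray, the $\R$-form would still exist and your ``only if'' direction would collapse. A correct repair is available: the Galois action on $\mc{Q}_G$ preserves $\mc{V}_G$ and $\mc{D}_G$, hence is induced by an automorphism of the based root datum and preserves the dominance order on $X^*(T)$; in the lined-closure setting, where a central one-dimensional torus acts on $V_\lambda$ by scalars, $\lambda$ is the unique dominance-maximal lattice point of the degree-one slice of the weight cone of $M_\lambda$, so stability of the cone forces $\sigma^*\lambda=\lambda$, i.e.\ $\theta^*\lambda=-\lambda$. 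Note that without that central grading your extremality claim is simply false --- for the adjoint representation of $SL_3$ the weight $\lambda=\omega_1+\omega_2$ lies in the interior of the cone generated by the weight monoid --- so De Concini's normalization of the setup must be made explicit.

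Second, Theorem~\ref{T:Huruguenmain} and the results packaged in Theorem~\ref{T:Extension to monoids} concern spherical varieties, which in this paper are normal by definition. They apply directly to $\mc{M}_\lambda$ but not to $M_\lambda$, which is normal only in the minuscule case by De Concini's theorem quoted just above the statement. You therefore need a separate descent argument relating $\R$-forms of $M_\lambda$ to $\R$-forms of its normalization: the passage from a form of $M_\lambda$ to a form of $\mc{M}_\lambda$ is easy (normalization commutes with separable base change), but the converse, which is exactly what the ``if'' direction requires for $M_\lambda$ itself, is not supplied by the remark that the two monoids ``share the same colored cone.''
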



\bibliographystyle{plain}
\bibliography{referenc}

\end{document}